\newcommand{\LL}{\mathbb{L}}
\newcommand{\PP}{\mathbb{P}}
\newcommand{\RR}{\mathbb{R}}
\newcommand{\TT}{\mathbb{T}}
\newcommand{\cala}{\mathcal{A}}
\newcommand{\cale}{\mathcal{E}}
\newcommand{\calo}{\mathcal{O}}
\newcommand{\frg}{\mathfrak{g}}
\newcommand{\frl}{\mathfrak{l}}
\newcommand{\frp}{\mathfrak{p}}
\newcommand{\frr}{\mathfrak{r}}
\newcommand{\HoCofib}{\operatorname{HoCofib}}
\newcommand{\HoFib}{\operatorname{HoFib}}
\newcommand{\Hom}{\operatorname{Hom}}
\newcommand{\Lagr}{\operatorname{Lagr}}
\newcommand{\Map}{\operatorname{Map}}
\newcommand{\RSpec}{\mathbb{R}\operatorname{Spec}}
\newcommand{\Spec}{\operatorname{Spec}}
\newcommand{\Sym}{\operatorname{Sym}}
\newcommand{\Symp}{\operatorname{Symp}}
\theoremstyle{plain}
\newtheorem{thm}{Theorem}[section]
\newtheorem{lem}[thm]{Lemma}
\newtheorem{cor}[thm]{Corollary}
\theoremstyle{definition}
\newtheorem{defn}[thm]{Definition}
\theoremstyle{remark}
\newtheorem{rmk}{Remark}[thm]
\begin{document}
\title{Shifted Symplectic and Poisson Structures on Spaces of Framed Maps}
\author{Theodore Spaide}
\date{July 2016}
\maketitle

\begin{abstract}
We examine shifted symplectic and Poisson structures on spaces of framed maps.  We prove some results about shifted Poisson structures analogous to those in \cite{PTVV} and \cite{Calaque} about symplectic structures.  Then, we consider the space \(\Map(X,D,Y)\) of maps from \(X\) to \(Y\) framed along a divisor \(D\).  We give conditions under which this space has a shifted symplectic or Poisson structure.  Classical examples of symplectic and Poisson structures are provided with this theorem.
\end{abstract}

\section*{Introduction}
This paper concerns shifted symplectic structures, a topic which lies in the intersection of derived and symplectic geometry.  Shifted symplectic structures are first described in \cite{PTVV}, in which the authors provide several constructions yielding them and show that they give a framework for explaining several ordinary symplectic structures.

The main motivating result for this paper is the following:
\begin{thm}[\cite{PTVV} Theorem 2.5 and discussion]
Let \(X\) be a smooth, proper, geometrically connected \(d\)-dimensional scheme with a trivialization \(\omega_{X/k}\simeq \calo_X\) of the canonical bundle.  Let \(Y\) be a derived Artin stack with an \(n\)-shifted symplectic structure.  Then the mapping stack \(\Map(X,Y)\) has a natural \((n-d)\)-shifted symplectic structure.
\end{thm}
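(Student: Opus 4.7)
The plan is to construct the shifted symplectic form on $\Map(X,Y)$ by an AKSZ-type transgression: pull $\omega_Y$ back along the universal evaluation and then integrate over $X$. Write $\mathrm{ev} \colon X \times \Map(X,Y) \to Y$ and $\pi \colon X \times \Map(X,Y) \to \Map(X,Y)$ for the evaluation and projection. Since pullback of forms respects the de Rham complex and the subcomplex of closed forms, $\mathrm{ev}^* \omega_Y$ is at once a closed $2$-form of degree $n$ on $X \times \Map(X,Y)$; this first step is purely formal.

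The second step is a pushforward along $\pi$. A smooth, proper, geometrically connected, $d$-dimensional scheme with a trivialization $\omega_{X/k} \simeq \calo_X$ carries a canonical $d$-orientation in the sense of PTVV, given by composing the trivialization with the trace $H^d(X, \omega_{X/k}) \to k$. This orientation promotes to a morphism of mixed complexes and so induces an integration functional $\int_X$ sending closed $p$-forms of degree $n$ on $X \times \Map(X,Y)$ to closed $p$-forms of degree $n-d$ on $\Map(X,Y)$. I would then set $\omega := \int_X \mathrm{ev}^* \omega_Y$.

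To establish nondegeneracy I would argue at a geometric point $f \in \Map(X,Y)$. The mapping-stack tangent formula gives $\TT_{\Map(X,Y), f} \simeq R\Gamma(X, f^* \TT_Y)$, hence dually $\LL_{\Map(X,Y), f} \simeq R\Gamma(X, f^* \TT_Y)^\vee$. The symplectic form on $Y$ identifies $f^* \TT_Y \simeq f^* \LL_Y[n]$, while Serre duality on $X$ together with $\omega_{X/k} \simeq \calo_X$ yields $R\Gamma(X, \cale)^\vee \simeq R\Gamma(X, \cale^\vee)[-d]$ for perfect $\cale$. Combining these isomorphisms produces $\LL_{\Map(X,Y), f}[n-d] \simeq \TT_{\Map(X,Y), f}$, and an unwinding of definitions shows that this equivalence coincides with the contraction induced by $\omega$, so $\omega$ is nondegenerate.

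The main obstacle is the second step: constructing the integration map on the \emph{full} complex of closed forms, rather than merely on underlying $2$-forms. One must lift the trace pairing given by the orientation coherently through the mixed (or negative cyclic) structure that encodes the closedness condition, so that a closed form really pushes forward to a closed form with all the higher homotopies compatible. This is the content of the orientation-plus-integration construction in PTVV; once it is in place the rest of the argument reduces to the Serre duality identification above, plus standard base-change compatibilities for the tangent complex of a mapping stack.
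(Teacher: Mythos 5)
Your proposal follows the same route the paper takes: it reduces the statement to the general mapping-stack theorem for an $\mathcal{O}$-compact source with a $d$-orientation (Theorem \ref{thm:sympmapthm}, i.e.\ PTVV Theorem 2.5), observing that the trivialization $\omega_{X/k}\simeq\calo_X$ supplies the orientation via the trace $H^d(X,\calo_X)\simeq H^d(X,\omega_{X/k})\simeq k$, and then transgresses $\omega_Y$ by $\int_{[X]}ev^*(-)$ with nondegeneracy checked by Serre duality on $\TT_{\Map(X,Y),f}\simeq R\Gamma(X,f^*\TT_Y)$. You also correctly identify that the only genuinely technical point --- lifting the integration map coherently through the negative cyclic (mixed) structure --- is exactly what is delegated to the PTVV construction, so the argument is complete modulo that citation, just as in the paper.
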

Furthermore, this construction preserves Lagrangian structures on the target.  That is, if \(Z\to Y\) has a Lagrangian structure, then \(\Map(X,Z)\to\Map(X,Y)\) inherits a natural Lagrangian structure (\cite{Calaque}, Theorem 2.10).

The work of this paper is to extend this result in two directions.  First, rather than maps with Calabi-Yau source, we look at \emph{framed maps} with Fano source.  Second, we extend from looking at symplectic structures to Poisson structures.

For the latter goal, we define an \emph{\(n\)-shifted Poisson structure} on a derived Artin stack \(X\) to be the data of a formal derived stack \(Y\) with \((n+1)\)-shifted symplectic structure, and a map \(X\to Y\) with Lagrangian structure.  This generalizes the idea of ordinary Poisson structures in the sense that ordinary Poisson structures on smooth schemes coincide with \(0\)-shifted quasi-Poisson structures, and all \(n\)-shifted symplectic structures are \(n\)-shifted quasi-Poisson structures.  We give several results about shifted Poisson and coisotropic structures analogous to those concerning shifted symplectic and Lagrangian structures.

For framed maps, we consider a smooth variety \(X\) with divisor \(i:D\to X\), and fix a map \(f:D\to Y\).  We then look at the space of framed maps
\[
\Map(X,D,Y,f)=\HoFib_f(\Map(X,Y)\to\Map(D,Y)),
\]
the moduli parametrizing maps \(g:X\to Y\) with \(g\circ i\simeq f\).

With these definitions in hand, we can state our main result:
\begin{thm}[Theorem \ref{thm:frmap}]
Let \(X\) be a \(d\)-dimensional proper smooth scheme and \(D\) an effective divisor.  Suppose \(E\) is an effective divisor of \(X\) such that \(\tilde{D}=2D+E\) is an anticanonical divisor.  Let \(Y\) be a derived Artin stack such that \(\Map(X,Y)\), \(\Map(\tilde{D},Y)\), \(\Map(D,Y)\), and \(\Map(D+E,Y)\) are themselves derived Artin stacks of locally finite presentation over \(k\).  Fix a base map \(f:D\to Y\).
\begin{enumerate}
\item Suppose \(Y\) is \(n\)-shifted symplectic and the projection \(\Map(D+E,Y)\to\Map(D,Y)\) is etale over \(f\).  Then \(\Map(X,D,Y)\) has an \((n-d)\)-shifted symplectic structure.
\item Suppose \(Y\) is \(n\)-shifted Poisson.  Then \(\Map(X,D,Y)\) has an \((n-d)\)-shifted Poisson structure.
\end{enumerate}
\end{thm}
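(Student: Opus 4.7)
The plan is to adapt the transgression arguments of \cite{PTVV} and \cite{Calaque} to the framed setting, viewing the pair $(X, D)$ together with the anticanonical identity $\tilde D = 2D + E$ as a ``Calabi-Yau with boundary''; the etale hypothesis on $\Map(D + E, Y) \to \Map(D, Y)$ will make the excess divisor $E$ invisible to the cohomology that controls nondegeneracy in part (1).

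First I would compute the tangent complex at a point $g \in \Map(X, D, Y, f)$. The homotopy fiber sequence
\[
\Map(X, D, Y, f) \to \Map(X, Y) \to \Map(D, Y),
\]
combined with the sheaf triangle $\calo_X(-D) \to \calo_X \to \calo_D$, identifies
\[
\TT_g \Map(X, D, Y, f) \simeq \mathrm{R}\Gamma(X,\, g^* \TT_Y(-D)).
\]
For the form in part (1) I would use transgression: pull the $n$-shifted closed $2$-form $\omega_Y$ back along the evaluation $ev : X \times \Map(X, D, Y, f) \to Y$, whose restriction to $D \times \Map(X, D, Y, f)$ factors through the fixed $f$, and integrate along $X$ relative to the pair $(X, D)$ using Serre duality and the identification $\omega_X \simeq \calo_X(-\tilde D)$. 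Closedness of the resulting $2$-form of degree $n - d$ on $\Map(X, D, Y, f)$ will follow from functoriality of the mixed de Rham complex under pullback and integration, as in \cite{PTVV}.

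For nondegeneracy, Serre duality on $X$ together with the self-duality $g^* \LL_Y \simeq g^* \TT_Y[-n]$ coming from the symplectic form on $Y$ yields
\[
\LL_g[n - d] \simeq \mathrm{R}\Gamma(X,\, g^* \TT_Y(-D - E)),
\]
and under these identifications the map induced by the form corresponds to the natural map
\[
\mathrm{R}\Gamma(X,\, g^* \TT_Y(-D - E)) \to \mathrm{R}\Gamma(X,\, g^* \TT_Y(-D))
\]
coming from $\calo_X(-D - E) \hookrightarrow \calo_X(-D)$, whose cofiber is $\mathrm{R}\Gamma(E,\, g^* \TT_Y|_E \otimes \calo_X(-D)|_E)$. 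This cofiber is precisely the relative tangent complex of $\Map(D + E, Y) \to \Map(D, Y)$ at the restriction of $g$ to $D + E$, and vanishes by the etale hypothesis; hence the form is nondegenerate.

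For part (2) I would invoke the shifted Poisson and coisotropic analogs of the \cite{PTVV}/\cite{Calaque} constructions developed in the earlier sections of the paper. Using the introduction's definition of an $n$-shifted Poisson structure on $Y$ as a Lagrangian structure on a map $Y \to W$ into an $(n + 1)$-shifted symplectic formal derived stack, composition with the base map gives $f' : D \to W$, and the framed Lagrangian-preservation analog will produce a Lagrangian structure on $\Map(X, D, Y, f) \to \Map(X, D, W, f')$ with target an $(n + 1 - d)$-shifted symplectic formal derived stack; this data is exactly an $(n - d)$-shifted Poisson structure, and the coisotropic machinery sidesteps the etale hypothesis, consistent with the theorem's statement. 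The main obstacle throughout will be closing the form genuinely at the level of the mixed de Rham complex rather than only up to coherent homotopy, and transporting these constructions cleanly into the Poisson/coisotropic setting that drives part (2).
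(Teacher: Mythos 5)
Your tangent-complex computation is correct and you have located the right place for the etale hypothesis, but the heart of the proof of part (1) --- the actual construction of the closed $2$-form --- is missing, and the most natural reading of your ``integrate along $X$ relative to the pair $(X,D)$'' step cannot work. Since $X$ is Fano rather than Calabi--Yau there is no $d$-orientation on $X$, and the pairing you would transgress, obtained by cupping two sections of $g^*\TT_Y(-D)$ against $\omega_Y$ and then applying some trace, necessarily factors through a map $\RR\Gamma(X,\calo_X(-2D))\to k[-d]$. The space of such maps is $H^d(X,\calo_X(-2D))^\vee\simeq H^0(X,\calo_X(-E))$ by Serre duality, which vanishes whenever $E\neq 0$; so any such ``multiplicative'' pairing is null-homotopic and cannot be nondegenerate. (Relatedly, your candidate adjoint goes the wrong way: the form induces $\TT_g\to\LL_g[n-d]$, i.e.\ a map $\RR\Gamma(X,g^*\TT_Y(-D))\to\RR\Gamma(X,g^*\TT_Y(-D-E))$, while the sheaf inclusion induces the opposite map; identifying them requires inverting the etaleness quasi-isomorphism, so etaleness must enter the \emph{construction} of the form, not merely the verification of nondegeneracy.) The paper never integrates over $X$: it writes $\Map(X,D,Y)\simeq\Map(X,Y)\times_{\Map(\tilde{D},Y)}\Map(\tilde{D},D,Y)$, uses that $\tilde{D}$ is Calabi--Yau of dimension $d-1$ to make $\Map(\tilde{D},Y)$ symplectic and $\Map(X,Y)\to\Map(\tilde{D},Y)$ Lagrangian via Calaque's boundary structures, and then proves the one genuinely new statement (Lemma \ref{lem:Lagfib}): $\Map(\tilde{D},D,Y)\to\Map(\tilde{D},Y)$ is Lagrangian. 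Isotropy there rests on the decomposition $\tilde{D}=D+(D+E)$ and the fact that the ideals of $D$ and $D+E$ multiply to zero on $2D+E$, making $(g^*\TT_Y)_{-D}$ and $(g^*\TT_Y)_{-(D+E)}$ orthogonal and (by etaleness) quasi-isomorphic; nondegeneracy is Serre duality on $D+E$. This orthogonality is the reason the anticanonical divisor must have the shape $2D+E$, a point your argument never explains, and the Lagrangian-intersection theorem then supplies the closed form for free.

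For part (2) your plan --- apply a framed Lagrangian-preservation statement to the defining Lagrangian $Y\to W$ so that $\Map(X,D,Y)\to\Map(X,D,W)$ is Lagrangian over a symplectic target --- has two problems. First, making $\Map(X,D,W)$ symplectic by part (1) would require etaleness of $\Map(D+E,W)\to\Map(D,W)$, which is precisely the hypothesis part (2) is meant to avoid. Second, in the paper the framed Lagrangian-preservation result is Theorem \ref{thm:frLagrmap}, which is deduced from Theorem \ref{thm:frmap} and Lemma \ref{lem:Lagfib}, so invoking it here is circular as stated. What the paper actually does is construct an explicit formal base $B$, a formal completion of $\Map(D+E,D,Y)\times_{\Map(D+E,D,Z)}\Map(\tilde{D},D,Z)$ along $\Map(\tilde{D},D,Y)$, and shows $\Map(\tilde{D},D,Y)\to\Map(\tilde{D},Y)$ is coisotropic over $B\to\Map(\tilde{D},Z)$; the factor $\Map(D+E,D,Y)$ supplies exactly the dual directions that are absent when etaleness fails. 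Some such thickening is unavoidable, and your proposal does not indicate how to produce it.
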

Furthermore, this preserves Lagrangian or coisotropic structures in the target:
\begin{thm}[Theorem \ref{thm:frLagrmap}]
Let \(X\) be a \(d\)-dimensional proper smooth scheme and \(D\) an effective divisor.  Suppose \(E\) is an effective divisor of \(X\) such that \(\tilde{D}=2D+E\) is anticanonical.  Let \(Y\) be a derived Artin stack such that \(\Map(X,Y)\), \(\Map(\tilde{D},Y)\), \(\Map(D,Y)\), and \(\Map(D+E,Y)\) are themselves derived Artin stacks of locally finite presentation over \(k\).  Fix a base map \(f:D\to Y\).  Let \(W\) be a derived Artin stack and pick a map \(s:W\to Y\).
\begin{enumerate}
\item Suppose \(Y\) is \(n\)-shifted symplectic, that the projection \(\Map(D+E,Y)\to\Map(D,Y)\) is etale over \(f\), and that \(\Map(D+E,W)\to\Map(D,W)\) is etale over any lift \(\tilde{f}\) of \(f\).  Suppose \(s:W\to Y\) has a Lagrangian structure.  Then \(\Map(X,D,W)\to\Map(X,D,Y)\) has a natural Lagrangian structure.
\item Suppose \(Y\) is \(n\)-shifted Poisson, and \(s:W\to Y\) has a coisotropic structure.  Then \(\newline{\Map(X,D,W)\to\Map(X,D,Y)}\) has a natural coisotropic structure.
\end{enumerate}
\end{thm}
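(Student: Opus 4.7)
The plan is to adapt Calaque's Lagrangian functoriality (\cite{Calaque}, Theorem 2.10) to the framed setting, transporting data through the transgression used in the proof of Theorem \ref{thm:frmap}. Recall that a Lagrangian structure on $s:W\to Y$ consists of an isotropic structure---a null-homotopy of $s^*\omega_Y$---together with non-degeneracy: the induced map $\TT_{W/Y}\to\LL_W[n-1]$ is an equivalence.

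For the isotropic structure in part (1), I would apply the transgression construction functorially. Theorem \ref{thm:frmap}(1) produces the $(n-d)$-shifted symplectic form on $\Map(X,D,Y)$ by pulling back $\omega_Y$ along the evaluation $X\times\Map(X,Y)\to Y$ and integrating over $X$ relative to $\Map(X,Y)$, using the anticanonical trivialization coming from $\tilde D=2D+E$ together with the framing along $D$. The same construction applied to the Lagrangian null-homotopy of $s^*\omega_Y$ yields a null-homotopy of the pullback of the $(n-d)$-shifted symplectic form to $\Map(X,D,W)$, giving the isotropic structure.

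The main obstacle is non-degeneracy. At a point $g:X\to W$ with $s\circ g|_D\simeq f$ and $\tilde f:=g|_D$, the relative tangent of $\Map(X,D,W)\to\Map(X,D,Y)$ is
\[
\HoFib\bigl(\mathbb{R}\Gamma(X,g^*\TT_{W/Y})\to\mathbb{R}\Gamma(D,\tilde f^*\TT_{W/Y})\bigr)\simeq\mathbb{R}\Gamma\bigl(X,g^*\TT_{W/Y}\otimes\calo_X(-D)\bigr),
\]
while Serre duality on $X$ combined with $\omega_X\simeq\calo_X(-2D-E)$ identifies $\LL_{\Map(X,D,W)}[n-d-1]$ at $g$ with $\mathbb{R}\Gamma(X,g^*\LL_W\otimes\calo_X(-D-E))[n-1]$. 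The Lagrangian equivalence $\TT_{W/Y}\simeq\LL_W[n-1]$ matches these up to the difference between twisting by $\calo_X(-D)$ and by $\calo_X(-D-E)$, i.e., a contribution supported on $E$. Using the fiber sequence $\TT_{W/Y}\to\TT_W\to s^*\TT_Y$, the two etale hypotheses force $\mathbb{R}\Gamma(E,g|_E^*\TT_W(-D))=0$ and $\mathbb{R}\Gamma(E,(s\circ g|_E)^*\TT_Y(-D))=0$, hence also $\mathbb{R}\Gamma(E,g|_E^*\TT_{W/Y}(-D))=0$, which kills the $E$-term and yields non-degeneracy. The bookkeeping here is the essential technical content of the proof.

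For part (2), I would invoke the definition (given earlier in the paper) of an $n$-shifted Poisson structure on $Y$ as a Lagrangian map into an $(n+1)$-shifted symplectic formal derived stack, together with the analogous encoding of the coisotropic structure on $s:W\to Y$ as compatible Lagrangian data on a formal thickening of $W$. Applying $\Map(X,D,-)$ and the formal-derived analog of part (1)---the transgression is compatible with formal completions, and both etale hypotheses are preserved under formal thickening---produces the required Lagrangian data and hence a coisotropic structure on $\Map(X,D,W)\to\Map(X,D,Y)$ compatible with the Poisson structure from Theorem \ref{thm:frmap}(2).
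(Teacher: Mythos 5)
Your non-degeneracy computation in part (1) is essentially the right calculation: the identification of the relative tangent with \(\RR\Gamma(X,g^*\TT_{W/Y}(-D))\), the Serre-dual identification of \(\LL_{\Map(X,D,W)}[n-d-1]\) with \(\RR\Gamma(X,g^*\LL_W(-D-E))[n-1]\) via \(\omega_X\simeq\calo_X(-2D-E)\), and the use of the two etale hypotheses to kill the \(E\)-supported discrepancy all match the tangent-complex bookkeeping the paper carries out. But the construction of the isotropic structure is a genuine gap. There is no \(d\)-orientation on the Fano \(X\), so ``the transgression applied to the null-homotopy of \(s^*\omega_Y\)'' is not a defined operation; the symplectic form on \(\Map(X,D,Y)\) is not a transgressed form over \(X\) but the form on the Lagrangian intersection \(\Map(X,Y)\times_{\Map(\tilde{D},Y)}\Map(\tilde{D},D,Y)\). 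Accordingly the paper produces the Lagrangian structure on \(\Map(X,D,W)\to\Map(X,D,Y)\) by composing Lagrangian correspondences over \(\Map(\tilde{D},Y)\) via Lemma \ref{lem:Lagrcorr}: one factor is \(\Map(X,W)\to\Map(X,Y)\times_{\Map(\tilde{D},Y)}\Map(\tilde{D},W)\) (Theorem \ref{thm:coisobdry}), and the other is a new lemma asserting that \(\Map(\tilde{D},D,W)\to\Map(\tilde{D},D,Y)\times_{\Map(\tilde{D},Y)}\Map(\tilde{D},W)\) is Lagrangian, whose proof uses the multiplicativity of the transgressed forms and the orthogonality, inside \(g^*\TT_W\) on \(\tilde{D}=2D+E\), of the subsheaves of sections vanishing on \(D\) and on \(D+E\). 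None of this appears in your proposal, and without a concretely specified isotropic homotopy you also cannot certify that the map \(\TT_r\to\LL[n-d-1]\) whose invertibility you check is the one that homotopy induces.

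Part (2) is also mishandled: there are no etale hypotheses in the Poisson statement, so it cannot be ``the formal-derived analog of part (1) with both etale hypotheses preserved under formal thickening.'' The point of the coisotropic formulation is precisely to dispense with etaleness by replacing the would-be Lagrangian base with a formal completion \(B\) of \(\Map(D+E,D,Y)\times_{\Map(D+E,D,Z)}\Map(\tilde{D},D,Z)\) along \(\Map(\tilde{D},D,Y)\) (and an analogous \(B'\) for \(W\)), and then verifying Lagrangian conditions for maps into fiber products built from these; the paper does this explicitly, and it is not a formal consequence of part (1).
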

Note that even if the target is shifted symplectic, we still might end up with a (non-symplectic) shifted Poisson structure in the end, because we may not have the etaleness property required in part (1) of these theorems.  Thus the Poisson language is necessary for these results.

Finally, we use these results to examine some classical examples of symplectic and Poisson structures, such as spaces of monopoles and instantons.

Further results and applications will be given in \cite{PS}.

Section 1 contains an overview of shifted symplectic structures.  It collects some definitions and results but is not a detailed reference.

Section 2 introduces the subject of shifted Poisson structures.  Results concerning shifted symplectic structures and Lagrangian morphisms are generalized to Poisson structures and coisotropic morphisms.

Section 3 concerns spaces of \emph{framed maps}.  We give conditions under which such spaces have shifted symplectic or Poisson structures.

Section 4 collects a few examples of the results.  We reproduce some classical results concerning framed mapping spaces.

\subsection{Acknowledgments}
This paper is the result of my PhD thesis work.  Thus I would like to give my advisor Tony Pantev an endless amount of gratitude.  I would also like to thank Ludmil Katzarkov and Betrand To\"en for their advice and suggestions.

The author was partially supported by FWF grant P24572 and a Simons Collaboration Grant on Homological Mirror Symmetry.

\section{Shifted Symplectic Structures}
In the following, \(k\) will be the base field, of characteristic \(0\).

Shifted symplectic structures are first defined in \cite{PTVV}.  We will recall some definitions and results.

Let \(X\) be a derived Artin stack.  We can form the de Rham algebra \(\Omega^*_X=\Sym^*_{\calo_X}(\LL_{X}[1])\).  This is a weighted sheaf whose weight \(p\) piece is \(\Omega^p_X=\Sym^p_{\calo_X}(\LL_{X}[1])=\wedge^p\LL_{X}[p]\).

The \emph{space of \(p\)-forms of degree \(n\) on \(X\)} is (the geometric realization of)
\[
\cala^p(X,n)=\tau_{\leq 0}\Hom_{L_{QCoh(X)}}(\calo_X,\wedge^p\LL_X[n]).
\]

We also construct the weighted negative cyclic chain complex \(NC^w\), whose degree \(n\), weight \(p\) part is
\[
NC^n(\Omega_X)(p)=(\bigoplus_{i\geq0}\wedge^{p+i}\LL_X[n-i],d_{\LL_X}+d_{dR}).
\]
The \emph{space of closed \(p\)-forms of degree \(n\)} is
\[
\cala^{p,cl}(X,n)=\tau_{\leq 0}\Hom_{L_{QCoh(X)}}(\calo_X,NC^n(\Omega_X)(p)).
\]
There is a natural ``underlying form'' map \(\cala^{p,cl}(X,n)\to \cala^{p}(X,n)\) corresponding to the projection
\[
\bigoplus_{i\geq0}\wedge^{p+i}\LL_X[n-i]\to \wedge^{p}\LL_X[n].
\]

A \(2\)-form \(\omega:\calo_X\to\wedge^2\LL_X[n]\) of degree \(n\) is \emph{nondegenerate} if the adjoint map \(\TT_X\to\LL_X[n]\) is a quasi-isomorphism.  An \emph{\(n\)-shifted symplectic form} on \(X\) is a closed \(2\)-form whose underlying form is nondegenerate.

\subsection{Lagrangian Structures}
Let \(Y\) be a derived Artin stack with an \(n\)-shifted symplectic form \(\omega\) and let \(f:X\to Y\) be a morphism.  An \emph{isotropic structure} on \(f\) is a homotopy \(h:0\sim f^*\omega\).

An isotropic structure on \(f\) defines a map \(\Theta_h:\TT_f\to\LL_X[n-1]\).  We say \(h\) is \emph{Lagrangian} if \(\Theta_h\) is a quasi-isomorphism of complexes.

For future motivational use, we note that if \(\bullet_n\) denotes the point with trivial \(n\)-shifted symplectic structure, then a Lagrangian structure on \(X\to \bullet_n\) is just an \((n-1)\)-shifted symplectic structure on \(X\).

Lagrangian structures are useful in generating new symplectic spaces:
\begin{thm}[\cite{PTVV}, Theorem 2.9]
Let \(X, L_1, L_2\) be derived Artin stacks, \(\omega\in \Symp(X,n)\) an \(n\)-shifted symplectic structure on \(X\), and \(f_i:L_i\to X\) a morphism with Lagrangian structure \(h_i\) for \(i=1,2\).  Then the product \(L_1\times^h_XL_2\) has a natural \((n-1)\)-shifted symplectic structure, which we denote by \(R(\omega, h_1,h_2)\).
\label{thm:Lagrint}
\end{thm}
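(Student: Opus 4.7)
The plan is to construct the $(n-1)$-shifted symplectic form $R(\omega, h_1, h_2)$ directly, then verify nondegeneracy by a tangent--cotangent computation; the strategy mirrors the proof of Theorem 2.9 of \cite{PTVV}.

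First, for the construction, write $L = L_1 \times_X^h L_2$ with projections $p_i \colon L \to L_i$ and common composite $p \colon L \to X$. The homotopy pullback square provides a canonical homotopy $\gamma \colon f_1 \circ p_1 \simeq f_2 \circ p_2$, hence a canonical identification of $p_1^* f_1^* \omega$ with $p_2^* f_2^* \omega$ in $\cala^{2,cl}(L,n)$. Each Lagrangian datum $h_i$ is a path from $0$ to $f_i^* \omega$ in $\cala^{2,cl}(L_i, n)$; pulling back along $p_i$ and concatenating $p_1^* h_1$, the path induced by $\gamma$, and the reverse of $p_2^* h_2$ yields a loop based at $0$ in $\cala^{2,cl}(L,n)$. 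The loop space of $\cala^{2,cl}(L,n)$ at the basepoint identifies with $\cala^{2,cl}(L, n-1)$ (via the shift $[-1]$ in the weighted negative cyclic complex), so this loop corresponds to a closed $2$-form $R(\omega, h_1, h_2)$ of degree $n-1$ on $L$.

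Second, to show $R(\omega, h_1, h_2)$ is nondegenerate, I would check that its adjoint $\Theta \colon \TT_L \to \LL_L[n-1]$ is a quasi-isomorphism. The homotopy pullback provides a fiber sequence
\[
\TT_L \to p_1^* \TT_{L_1} \oplus p_2^* \TT_{L_2} \to p^* \TT_X
\]
and dually a cofiber sequence
\[
p^* \LL_X \to p_1^* \LL_{L_1} \oplus p_2^* \LL_{L_2} \to \LL_L.
\]
The quasi-isomorphisms $\Theta_{h_i} \colon \TT_{f_i} \simeq \LL_{L_i}[n-1]$ coming from the Lagrangian data, together with the nondegeneracy quasi-isomorphism $\TT_X \simeq \LL_X[n]$, identify a shift of the first triangle with the second, so $\TT_L$ and $\LL_L[n-1]$ are abstractly quasi-isomorphic. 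The content of nondegeneracy is that $\Theta$ realizes this abstract identification.

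The main obstacle is precisely this last compatibility check. Unpacking $R(\omega, h_1, h_2)$ tangent-wise, a tangent vector at a point of $L$ is a pair of tangent vectors at $L_1, L_2$ whose images in $\TT_X$ agree up to $\gamma$, and pairing two such vectors using the three segments of the concatenated loop produces a sum of three contributions, one for each $h_i$ and one from $\omega$ itself. A careful diagram chase, organized by a $3{\times}3$ square of fiber/cofiber sequences, shows that this pairing coincides with the comparison map between the tangent and cotangent triangles under the Lagrangian and nondegeneracy quasi-isomorphisms. This bookkeeping is the technical core of the proof, and is the step where all three pieces of data $\omega, h_1, h_2$ enter together.
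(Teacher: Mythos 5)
This theorem is quoted from \cite{PTVV} (Theorem 2.9) and the paper supplies no proof of it, so the only meaningful comparison is with the cited source. Your sketch --- producing the closed $(n-1)$-form as the loop at $0$ in $\cala^{2,cl}(L,n)$ obtained by concatenating $p_1^*h_1$, the path induced by the canonical homotopy $\gamma$, and the reverse of $p_2^*h_2$, and then verifying nondegeneracy by matching the fiber sequence $\TT_L \to p_1^*\TT_{L_1}\oplus p_2^*\TT_{L_2}\to p^*\TT_X$ against its shifted dual via the quasi-isomorphisms $\Theta_{h_i}$ and $\omega^\flat:\TT_X\simeq\LL_X[n]$ --- is exactly the argument given there, and it is correct.
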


\subsection{Symplectic Structures on Mapping Stacks}
Let \(X\) and \(Y\) be derived Artin stacks, with \(Y\) having an \(n\)-shifted symplectic form.  A \emph{\(d\)-orientation} \([X]\) on \(X\) is a ``fundamental class'' \([X]:\RR\Gamma(X,\calo_X)\to k[-d]\) satisfying certain nondegeneracy properties.  Given such an orientation, we can construct a symplectic form on \(\Map(X,Y)\):
\begin{thm}[\cite{PTVV}, Theorem 2.5]  Let \(Y\) be a derived Artin stack, and let \(X\) be an \(\calo\)-compact derived stack with a d-orientation \([X]\).  Assume the derived mapping stack \(\Map(X,Y)\) is a derived Artin stack locally of finite presentation over \(k\).  Then we have a map
\[
\int_{[X]}ev^*(-):\Symp(Y,n)\to\Symp(\Map(X,Y),n-d).
\]
\label{thm:sympmapthm}
\end{thm}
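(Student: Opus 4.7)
The plan is to send an $n$-shifted symplectic form $\omega$ on $Y$ to $\int_{[X]} ev^*\omega$ on $F := \Map(X,Y)$, where $ev : X \times F \to Y$ is the evaluation morphism. The construction factors as pullback by $ev$ followed by integration along $X$ against the fundamental class $[X]$; the principal mathematical work is verifying nondegeneracy.

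Pullback of closed forms along $ev$ is functorial and immediate, yielding $\cala^{2,cl}(Y,n) \to \cala^{2,cl}(X\times F, n)$. For the integration map $\int_{[X]} : \cala^{2,cl}(X\times F, n) \to \cala^{2,cl}(F, n-d)$, I would use the K\"unneth-type splitting $\LL_{X\times F} \simeq p_X^*\LL_X \oplus p_F^*\LL_F$, so that each $\wedge^p\LL_{X\times F}$ breaks into pieces of the form $p_X^*\wedge^i\LL_X \otimes p_F^*\wedge^{p-i}\LL_F$. Pushing forward along $p_F$ produces, on the $F$ side, complexes built from $\RR\Gamma(X,\wedge^i\LL_X)$; composing with the trace $\RR\Gamma(X,\calo_X)\to k[-d]$ on the $i=0$ component and checking compatibility with $d_{dR}$ upgrades this to a map of weighted negative cyclic complexes $NC^n(\Omega_{X\times F})(2) \to NC^{n-d}(\Omega_F)(2)$, which is exactly what is needed at the closed-form level.

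For nondegeneracy of the underlying $2$-form, it suffices to work at a $k$-point $f:X\to Y$ of $F$. Standard deformation theory of mapping stacks gives $\TT_F|_f \simeq \RR\Gamma(X, f^*\TT_Y)$. Applying $\RR\Gamma(X, f^*(-))$ to the adjoint quasi-isomorphism $\TT_Y \to \LL_Y[n]$ produces a quasi-isomorphism $\TT_F|_f \to \RR\Gamma(X, f^*\LL_Y)[n]$. The $d$-orientation provides, via $\calo$-compactness, a perfect pairing $\RR\Gamma(X, f^*\TT_Y) \otimes \RR\Gamma(X, f^*\LL_Y) \to \RR\Gamma(X, \calo_X) \to k[-d]$, which identifies $\RR\Gamma(X, f^*\LL_Y) \simeq \LL_F|_f[-d]$. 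Composing, the adjoint of the underlying form at $f$ is a quasi-isomorphism $\TT_F|_f \to \LL_F|_f[n-d]$, establishing nondegeneracy.

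The main obstacle is the integration step at the closed-form level: the bare integration of underlying $p$-forms has to be lifted to a map of the full weighted negative cyclic complexes in a manner compatible with the mixed differential $d_{\LL}+d_{dR}$, tracking the homological shift coming from the trace. Once this is set up, the nondegeneracy check reduces to the pointwise Serre-duality-style argument above and is essentially formal given the hypotheses on $X$.
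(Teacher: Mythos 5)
This statement is quoted from \cite{PTVV} (Theorem 2.5) and the paper offers no proof of its own, so there is nothing internal to compare against. Your sketch correctly reproduces the standard argument from the cited source: the K\"unneth projection onto the \(\calo_X\boxtimes\wedge^p\LL_F\) components (which is a quotient of mixed complexes since \(d_{dR}^X\) raises the \(X\)-weight), composition with \([X]\) to define \(\int_{[X]}\) on negative cyclic complexes, and nondegeneracy via \(\TT_{\Map(X,Y)}|_f\simeq\RR\Gamma(X,f^*\TT_Y)\) together with the perfect pairing supplied by \(\calo\)-compactness and the \(d\)-orientation.
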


Several examples of orientations are given in \cite{PTVV}, following Theorem 2.5.  One particular example is the case that \(X\) is Calabi-Yau.  If \(X\) has dimension \(d\) and we have an isomorphism \(\omega_X\simeq\calo_X\), then projection of \(\RR\Gamma(X,\calo_X)\) onto the degree \(d\) cohomology \(H^d(X,\calo_X)[-d]\), followed by the isomorphism
\[
H^d(X,\calo_X)\simeq H^d(X,\omega_X)\simeq k
\]
provides a map \([X]:\RR\Gamma(X,\calo_X)\to k[-d]\).

\subsection{Boundary Structures}
The following is due to \cite{Calaque}.  Let \(X\) and \(Y\) be derived Artin stacks.  Assume that \(X\) and \(Y\) are \(\calo\)-compact and \(X\) has a \(d\)-orientation \([X]\), and let \(f:X\to Y\) be a map.  A \emph{boundary structure} on \(f\) is a homotopy \(0\sim f_*[X]\).  This is dual to the notion of isotropic structures, and one can define a nondegeneracy condition dual to the notion of Lagrangian structures.

Then we have:
\begin{thm}[\cite{Calaque}, Theorem 2.9]
Let \(X\) and \(Y\) be as above.  Let \(Z\) be a derived Artin stack with an \(n\)-shifted symplectic structure.  Then a nondegenerate boundary structure on \(f\) yields a Lagrangian structure on \((f\circ-):\Map(X,Z)\to\Map(Y,Z).\)
\label{thm:bdry}
\end{thm}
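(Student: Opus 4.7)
The strategy parallels the proof of Theorem \ref{thm:sympmapthm}, with the boundary structure playing the role of an orientation. Write $\omega \in \cala^{2,cl}(Z, n)$ for the symplectic form on $Z$, and $\omega_X := \int_{[X]} ev_X^*(\omega) \in \cala^{2,cl}(\Map(X,Z), n-d)$ for the symplectic form on $\Map(X,Z)$ produced by Theorem \ref{thm:sympmapthm}. Let $\phi: \Map(Y,Z) \to \Map(X,Z)$ denote the natural map of mapping stacks induced by $f$; the plan is to equip $\phi$ with a Lagrangian structure.

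\emph{Step 1 (isotropic structure).} I would first compute $\phi^* \omega_X$. The commuting square of evaluation maps with corners $\Map(Y,Z) \times X$, $\Map(Y,Z) \times Y$, $\Map(X,Z) \times X$, and $Z$ (where the top morphism is $\mathrm{id} \times f$ and the left is $\phi \times \mathrm{id}$) together with functoriality of the pairing $\int_{[-]} ev^*$ identifies $\phi^* \omega_X$ with the integration of $ev_Y^* \omega$ against the pushforward class $f_*[X] : \RR\Gamma(Y, \calo_Y) \to k[-d]$. The boundary structure $h : 0 \sim f_*[X]$ then furnishes a canonical nullhomotopy $\tilde h : 0 \sim \phi^* \omega_X$ of closed 2-forms of degree $n-d$, which I would take as the isotropic structure on $\phi$.

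\emph{Step 2 (nondegeneracy).} The isotropic structure $\tilde h$ yields a comparison map $\Theta_{\tilde h} : \TT_\phi \to \LL_{\Map(Y,Z)}[n-d-1]$. At a point $g \in \Map(Y,Z)$, the standard identifications $\TT_{\Map(Y,Z), g} \simeq \RR\Gamma(Y, g^* \TT_Z)$ and $\TT_{\Map(X,Z), g\circ f} \simeq \RR\Gamma(X, f^* g^* \TT_Z)$ exhibit $\TT_{\phi, g}$ as the fiber of the restriction map $\RR\Gamma(Y, g^* \TT_Z) \to \RR\Gamma(X, f^* g^* \TT_Z)$. Combining the symplectic quasi-isomorphism $\TT_Z \simeq \LL_Z[n]$ with the Poincar\'e--Lefschetz-type duality encoded by nondegeneracy of $h$---identifying relative cohomology of $(Y,X)$ with the $k$-linear dual of absolute cohomology of $Y$ up to shift---then identifies this fiber with $\LL_{\Map(Y,Z), g}[n-d-1]$, as desired.

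The technical heart of the argument is Step 2. Once the nondegeneracy condition on a boundary structure is formulated as a Poincar\'e--Lefschetz-type pairing, dual to the Serre pairing that an orientation gives, one must check that it combines with the symplectic pairing on $Z$ to reproduce the Lagrangian pairing on mapping stacks, and that all the shifts line up. Step 1 is by contrast essentially formal: it is the functoriality of $\int_{[-]} ev^*$ together with the very definition of a boundary structure as a nullhomotopy of the pushforward of the orientation.
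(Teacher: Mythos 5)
Your outline is correct and is essentially the argument the paper relies on: Theorem \ref{thm:bdry} itself is imported from \cite{Calaque} without an independent proof, but the identical mechanism---the isotropic structure obtained by integrating $ev^*\omega$ against the nullhomotopy $0\sim f_*[X]$ supplied by the boundary structure, and nondegeneracy obtained by combining the symplectic quasi-isomorphism on $Z$ with the duality between the relative tangent (a homotopy fiber of restriction maps) and the shifted dual of global sections encoded by the nondegenerate boundary structure---appears explicitly in the paper's proof of the Poisson generalization, Theorem \ref{thm:coisobdry}. The only discrepancy is that you have (sensibly) taken the map to be restriction $\Map(Y,Z)\to\Map(X,Z)$ along $f$, which is the direction for which the target carries the $(n-d)$-shifted symplectic form; the statement's notation \((f\circ-):\Map(X,Z)\to\Map(Y,Z)\) appears to be a typo, as the paper's own use of the result in Corollary \ref{cor:acanLag} confirms.
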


In particular we are interested in the following case.  Let \(X\) be a geometrically connected smooth proper algebraic variety of dimension \(d+1\), and say it has an anticanonical effective divisor \(D\).  Then \(D\) is a \(d\)-dimensional Calabi-Yau variety by the adjunction formula, and so has a \(d\)-orientation \([D]:\RR\Gamma(D,\calo_D)\to k[-d]\).  Then we have:
\begin{lem} [\cite{Calaque}, Claim 3.3]
There is a canonical nondegenerate boundary structure on \(i:D\to X\).
\end{lem}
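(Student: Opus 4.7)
The strategy is to build the boundary structure out of the short exact sequence of the divisor, identify the required null-homotopy with an incarnation of Serre duality on \(X\), and then read off nondegeneracy from Serre duality on \(X\) and \(D\) glued together via the adjunction formula. Unwinding the definition, a boundary structure on \(i:D\to X\) is a null-homotopy of \([D]\circ i^*:\mathbb{R}\Gamma(X,\calo_X)\to k[-d]\), so the task reduces to constructing such a null-homotopy canonically and then verifying nondegeneracy.

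The first step is to start from
\[
0\to\calo_X(-D)\to\calo_X\to i_*\calo_D\to 0,
\]
use the anticanonical hypothesis to identify \(\calo_X(-D)\simeq\omega_X\), and apply \(\mathbb{R}\Gamma(X,-)\) to obtain the distinguished triangle
\[
\mathbb{R}\Gamma(X,\omega_X)\to\mathbb{R}\Gamma(X,\calo_X)\xrightarrow{i^*}\mathbb{R}\Gamma(D,\calo_D)\xrightarrow{\partial}\mathbb{R}\Gamma(X,\omega_X)[1].
\]
Any factorization \([D]\simeq\psi\circ\partial\) yields a canonical null-homotopy of \([D]\circ i^*\), because consecutive maps in a distinguished triangle compose canonically to zero.

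The second step is to produce such a \(\psi\) and prove the factorization. Serre duality on the smooth proper \((d+1)\)-dimensional \(X\) provides a canonical trace \(\mathbb{R}\Gamma(X,\omega_X)\to k[-(d+1)]\), which I take as \(\psi\) after the shift. To identify \(\psi\circ\partial\simeq[D]\), one uses the adjunction formula \(\omega_D\simeq(\omega_X\otimes\calo_X(D))|_D\); since \(D\) is anticanonical this canonically trivializes \(\omega_D\), and by construction the orientation \([D]\) is the Serre-duality trace on \(D\) composed with this trivialization. The compatibility then amounts to Grothendieck's residue formula: the connecting map \(\partial\) is the residue along \(D\), and under the two Serre dualities this matches the trace on \(X\) with the trace on \(D\).

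Finally, nondegeneracy asks that a certain induced map on tangent/cotangent complexes be a quasi-isomorphism. I would dualize the conormal sequence
\[
\calo_X(-D)|_D\to\Omega^1_X|_D\to\Omega^1_D\to 0,
\]
combine it with the anticanonical trivialization, and match it termwise against Serre duality on \(X\) (shifted by \(d+1\)) and on \(D\) (shifted by \(d\)); each match is a quasi-isomorphism, so nondegeneracy follows. The main obstacle is the derived-level bookkeeping in the second step: each ingredient (residue theorem, adjunction, Serre duality) is classical, but assembling them as a genuine homotopy rather than a mere cohomological identity is what turns this into a lemma instead of a one-line remark.
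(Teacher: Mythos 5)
The paper does not prove this lemma at all: it is imported verbatim from \cite{Calaque} (Claim 3.3), so there is no internal argument to compare against. Your construction of the boundary structure itself is correct and is essentially the argument in the cited source: twist the divisor sequence, use \(\calo_X(-D)\simeq\omega_X\), and observe that \(i_*[D]=[D]\circ i^*\) is canonically null-homotopic once \([D]\) is exhibited as the connecting map \(\RR\Gamma(D,\calo_D)\to\RR\Gamma(X,\omega_X)[1]\) followed by the trace \(\RR\Gamma(X,\omega_X)\to k[-d-1]\); the residue/adjunction compatibility you flag is exactly the point needed to identify this with the Calabi--Yau orientation on \(D\). The one place you drift is the nondegeneracy step. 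Nondegeneracy of a boundary structure is not a condition on tangent or cotangent complexes (that is the Lagrangian condition, which lives downstream on mapping stacks); here \(X\) and \(D\) are plain schemes and their cotangent complexes carry no relevant pairing. The actual condition is that for every perfect complex \(E\) on \(X\) the induced map from \(\HoFib\bigl(\RR\Gamma(X,E)\to\RR\Gamma(D,E|_D)\bigr)\) to \(\RR\Gamma(X,E^\vee)^\vee[-d-1]\) is an equivalence. Since that homotopy fiber is \(\RR\Gamma(X,E\otimes\calo_X(-D))\simeq\RR\Gamma(X,E\otimes\omega_X)\), the condition is precisely Serre duality on \(X\), with no need for the conormal sequence. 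So replace your final paragraph with this perfect-complex formulation; the rest of the proposal stands.
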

And so, using Theorem \ref{thm:bdry}, we get
\begin{cor}
Let \(X\) be a geometrically connected smooth proper algebraic variety of dimension \(d+1\), and let \(D\) be an anticanonical effective divisor.  Let \(Y\) have an \(n\)-shifted symplectic form \(\omega\in\Symp(Y,n)\).  Assume \(\Map(X,Y)\) and \(\Map(D,Y)\) are derived Artin stacks.

Then there exist a natural \((n-d)\)-shifted symplectic form on \(\Map(D,Y)\) and Lagrangian structure on \(\Map(X,Y)\to\Map(D,Y)\).
\label{cor:acanLag}
\end{cor}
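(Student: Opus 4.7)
The plan is to assemble the corollary directly from the three preceding results. First, since $D$ is an anticanonical effective divisor in the smooth proper variety $X$ of dimension $d+1$, the adjunction formula gives $\omega_D\simeq\calo_D$, so $D$ is a proper Calabi-Yau variety of dimension $d$. The construction recalled immediately after Theorem \ref{thm:sympmapthm} therefore endows $D$ with a canonical $d$-orientation $[D]:\RR\Gamma(D,\calo_D)\to k[-d]$, and properness of $D$ guarantees $\calo$-compactness.

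Applying Theorem \ref{thm:sympmapthm} to the pair $(D,[D])$ together with the $n$-shifted symplectic structure $\omega$ on $Y$ produces the natural $(n-d)$-shifted symplectic form $\int_{[D]} ev^*\omega$ on $\Map(D,Y)$. This already delivers the first half of the statement.

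For the Lagrangian structure, I would invoke the preceding lemma (Claim 3.3 of \cite{Calaque}) to obtain a canonical nondegenerate boundary structure on the inclusion $i:D\to X$, then feed this into Theorem \ref{thm:bdry} with auxiliary stack $Y$ in the role of the symplectic target. The output is a Lagrangian structure on the restriction map $\Map(X,Y)\to\Map(D,Y)$, taken with respect to the symplectic form just constructed.

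There is no genuine obstacle: the corollary is by design a direct consequence of the tools recalled above, and the proof is essentially a formality. The one point that deserves a moment of attention is the compatibility between the two halves, namely that the $(n-d)$-shifted symplectic form on $\Map(D,Y)$ serving as the target of the Lagrangian structure is precisely the one obtained by integration against the \emph{same} orientation $[D]$ used in the boundary structure. This compatibility is built into the definition of nondegeneracy for boundary structures, since Theorem \ref{thm:bdry} is phrased using the very same integration construction as Theorem \ref{thm:sympmapthm}.
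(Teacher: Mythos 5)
Your proposal is correct and follows exactly the route the paper intends: the adjunction formula makes \(D\) a \(d\)-dimensional Calabi--Yau, giving the \(d\)-orientation used in Theorem \ref{thm:sympmapthm} for the form on \(\Map(D,Y)\), and the preceding lemma (Claim 3.3 of \cite{Calaque}) combined with Theorem \ref{thm:bdry} gives the Lagrangian structure. The paper states the corollary as an immediate consequence of these same ingredients without further proof, so there is nothing to add.
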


\section{Shifted Poisson Structures}
To motivate the definition of shifted Poisson structures, let us first look at ordinary Poisson structures in terms of shifted symplectic structures.  Let \(X\) be a smooth (underived) scheme with Poisson bivector field \(\pi\).  The Poisson structure is equivalently given by the sheaf map \(\pi^\sharp:T^\vee_X\to T_X\).

Here is one way to get maps \(T^\vee_X\to T_X\).  Let \(Y\) be a formal stack with a \(1\)-shifted symplectic structure, and let \(q:X\to Y\) have a Lagrangian structure \(h\).  Then the Lagrangian condition gives a quasi-isomorphism \(\TT_q\simeq T^\vee X\), and composing with \(\TT_q\to T_X\) yields a map \(\pi^\sharp_h:T^\vee X\to TX\).

This construction necessarily yields a Poisson structure, and in fact provides all Poisson structures:
\begin{thm}
Let \(X\) be a smooth scheme.  Then:
\begin{enumerate}
\item Given a Poisson structure \(\pi\) on \(X\), there exist a formal derived stack \(Y\) with \(1\)-shifted symplectic \(\omega\), a map \(q:X\to Y\), and Lagrangian structure \(h\) on \(q\) such that \(\pi^{\sharp}=\pi^\sharp_h\).
\item Let \(Y\) be a formal derived stack with \(1\)-shifted symplectic form \(\omega\), and let \(q:X\to Y\) be a map with Lagrangian structure \(h\).  Then \(\pi^\sharp_h\) is a Poisson structure on \(X\).
\label{thm:Poisclas}
\end{enumerate}
\end{thm}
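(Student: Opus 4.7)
The plan is to treat the two directions in sequence, using the cotangent Lie algebroid description of a Poisson structure as the bridge between classical bivectors and the derived datum of Lagrangian maps into $1$-shifted symplectic formal stacks.

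For part (2), I would verify that $\pi^\sharp_h$ is a Poisson bivector as follows. Since $n=1$, the Lagrangian condition gives a quasi-isomorphism $\Theta_h:\TT_q \to \LL_X$, and since $X$ is classical smooth, both sides reduce to ordinary locally free sheaves concentrated in degree $0$. The composite $\pi^\sharp_h : \LL_X \xrightarrow{\Theta_h^{-1}} \TT_q \to \TT_X$ is therefore an honest bivector on $X$. Skew-symmetry follows by tracing the skew-symmetry of the underlying $2$-form on $Y$ (restricted along $q$) through the self-dual identification $\Theta_h$. The Jacobi identity $[\pi^\sharp_h,\pi^\sharp_h]=0$ is the essential content; I would extract it from the closedness of $\omega$ together with the isotropic homotopy $h$, either by a direct chain-level computation at the weight-$3$ part of the negative cyclic complex, or equivalently by reducing locally to a formal Darboux model where the claim is standard.

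For part (1), given a Poisson $\pi$, I would construct $Y$ from the cotangent Lie algebroid $L_\pi$ of $\pi$, whose underlying sheaf is $T^\vee X$, with anchor $\pi^\sharp$ and Koszul bracket determined by $\pi$. Under the correspondence between formal derived stacks equipped with a map from $X$ and dg Lie algebroids on $X$, let $Y := B L_\pi$ be the classifying formal stack of $L_\pi$, with $q:X\to Y$ the tautological section. The relative tangent complex $\TT_q$ is then $L_\pi \simeq T^\vee X$ sitting in degree $0$. The $1$-shifted symplectic form on $Y$ is to be constructed from the canonical pairing between the Chevalley--Eilenberg complex of $L_\pi$ and its dual; concretely, $\omega$ corresponds via Koszul duality to the identity of $L_\pi$. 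The Lagrangian structure on $q$ is then tautological, and the induced $\pi^\sharp_h$ recovers $\pi^\sharp$ by construction.

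The main obstacle will be the symplectic side of (1): producing a closed $2$-form of degree $1$ on $Y = BL_\pi$ in the sense of Section~1, and checking its nondegeneracy. This amounts to writing $\LL_Y$ in terms of $L_\pi$ and its Chevalley--Eilenberg differential, and then exhibiting an explicit closed element of $\cala^{2,cl}(Y,1)$ inducing the identity $\TT_q \simeq \LL_X$. Once this is in place, the matching of $\pi^\sharp_h$ with $\pi^\sharp$ and the Darboux reduction used in (2) are routine verifications.
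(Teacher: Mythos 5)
Your construction for part (1) is essentially the paper's: the derived quotient $[X/\pi^\sharp]$ that the paper builds \emph{is} the classifying formal stack of the cotangent Lie algebroid, with structure sheaf the Chevalley--Eilenberg-type complex $(\Sym^\bullet_{\calo_X}(TX[-1]),[\pi,-])$, and the $1$-shifted symplectic form is exactly your ``identity of $L_\pi$'': the canonical section $\mathrm{id}_{TX}$ of the degree-$1$ piece $T^\vee X\otimes TX$ of $\wedge^2\LL_Y$, with antisymmetry of $\pi$ giving $d\omega=0$, closedness witnessed by $d_{dR}(\mathrm{id}_{TX})=\omega$ with $\mathrm{id}_{TX}$ viewed in $(\calo_Y)_1\otimes(\LL_Y)_0$, and the Lagrangian structure on $q$ tautologically the zero homotopy. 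The one genuinely nontrivial input you elide is the compatibility $\wedge^{p+1}\pi^\sharp\circ d=[\pi,-]\circ\wedge^p\pi^\sharp$, which is what makes $\Sym^\bullet(\pi^\sharp[-1])$ a map of graded mixed cdgas and hence makes the quotient exist; the paper proves this by induction on $p$ using that $[\pi,-]$ is a square-zero derivation. You correctly identify the remaining work (exhibiting the closed $2$-form and its nondegeneracy) and it goes through as you expect.

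The soft spot is the Jacobi identity in part (2). You offer two routes --- a weight-$3$ computation in the negative cyclic complex, or reduction to a formal Darboux model --- and neither is carried out; the Darboux option in particular is dubious here, since the relevant target is a formal derived stack (a nil-thickening of $X$), not a derived scheme to which the standard shifted Darboux theorems apply, and setting up a Darboux statement in that generality would be far more work than the theorem itself. The paper's mechanism is different and much lighter: having identified $q^*\TT_Y\simeq\{T^\vee X\xrightarrow{\pi^\sharp}TX\}$ and hence the structure sheaf of the second infinitesimal neighborhood $X_{q,2}$ of $X$ in $Y$ with $(\Sym^{\leq2}_{\calo_X}(TX[-1]),[\pi,-])$, the mere requirement that this be a dg-algebra forces $[\pi,[\pi,-]]=\tfrac12[[\pi,\pi],-]=0$ on $\calo_X$, i.e.\ $[\pi,\pi]=0$. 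So the Jacobi identity comes not from the closedness data of $\omega$ at higher weight but from the multiplicative structure of the formal thickening. I would replace your two proposed strategies with this argument; as written, the Jacobi step is the one place where your sketch does not yet amount to a proof.
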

\begin{proof}
For (1), consider the map \(\pi^\sharp:TX\to T^\vee X\).  This map extends to a map
\[
\wedge^p\pi^\sharp:\wedge^pT^\vee_X\to \wedge^pT_X
\]
such that the square
\[
\xymatrix{
\wedge^pT^\vee_X\ar[r]^{\wedge^p\pi^\sharp}\ar[d]^d&\wedge^pT_X\ar[d]^{[\pi,-]}\\
\wedge^{p+1}T^\vee_X\ar[r]^{\wedge^{p+1}\pi^\sharp}&\wedge^{p+1}T_X
}
\]
commutes, where \([-,-]\) is the Schouten bracket.  To see this, first note that \([\pi,-]\) has square \(0\): for \(a\in\Gamma(U,\wedge^p TX)\), we have \([\pi,[\pi,a]]=\frac{1}{2}[[\pi,\pi],a]\), but \([\pi,\pi]=0\) is exactly equivalent to the Jacobi identity for \(\pi\).  Additionally, \([\pi,-]\) is a derivation: \([\pi,ab]=[\pi,a]b+(-1)^aa[\pi,b]\).  The claim holds almost by definition for \(p=0\): for \(f\in \Gamma(U,\calo_X)\), we have
\[
[\pi,a]=\iota_{df}\pi=\pi^\sharp(df).
\]
Assuming the claim for \(p-1\), note that \(\Gamma(U,\wedge^pT^\vee_X)\) is generated \(k\)-linearly by sections of the form \(fd\alpha\), for \(\alpha\in \Gamma(U,\wedge^{p-1}T^\vee_X)\).  Then
\begin{align*}
\wedge^{p+1}\pi^\sharp(d(fd\alpha))&=\wedge^{p+1}\pi^\sharp(df\wedge d\alpha)\\
&=\pi^\sharp(df)\wedge (\wedge^{p}\pi^\sharp(d\alpha)),
\end{align*}
and
\begin{align*}
[\pi,\wedge^{p}\pi^\sharp(fd\alpha)]&=[\pi,f\wedge^{p}\pi^\sharp(d\alpha)]\\
&=[\pi,f]\wedge[\pi,\wedge^{p-1}\pi^\sharp(\alpha)]+f[\pi,[\pi,\wedge^{p-1}\pi^\sharp(\alpha)]]\\
&=\pi^\sharp(df)\wedge (\wedge^{p}\pi^\sharp(d\alpha)).
\end{align*}
Thus the map \(T^\vee X\to TX\) induces a morphism
\[
\Sym^\bullet(\pi^\sharp[-1]):(\Sym^\bullet(T^\vee X[-1]),d)\to(\Sym^\bullet(TX[-1]),[\pi,-])
\]
of graded mixed cdga.

We can then form the derived quotient \([X/\pi^\sharp]\).  This is a formal stack equipped with a map \(q:X\to[X/\pi^\sharp]\).  It satisfies the universal property that a map \(f:X\to F\) to a formal derived stack \(F\) descends to \(\varphi:[X/\pi^\sharp]\to F\) iff the map \(\Sym^\bullet(\pi^\sharp[1])\) factors through
\[
\psi:(\Sym^\bullet(\LL_{f,big}[-1]),d)\to(\Sym^\bullet(TX[-1]),[\pi,-]),
\]
and a map \(\psi\) of mixed graded cdgas uniquely determines \(\varphi\).

The structure sheaf of this stack is \((\Sym^\bullet_{\calo_X}(TX[-1]),[\pi,-])\).  Its tangent complex is
\[
\TT_{[X/\pi^\sharp]}\simeq\{\xymatrix{T^\vee X\ar[r]^{\pi^\sharp}&TX}\},
\]
with \(TX\) sitting in degree \(0\).  Looking at the \(2\)-forms, we have
\[
\wedge^2\LL_{[X/\pi^\sharp]}\simeq\{\wedge^2T^\vee X\to T^\vee X\otimes TX\to\Sym^2 TX\}.
\]
The degree \(1\) component, \(T^\vee X\otimes TX\cong \Hom(TX,TX)\), contains a canonical section \(\omega=id_{TX}\).  For this to define a \(2\)-form we need \(d\omega=0\).  To see this, note that the image of \(\omega\) via
\[
 T^\vee X\otimes TX\to TX\otimes TX
\]
is the bivector field \(\pi\); that this disappears in \(\Sym^2 TX\) is precisely the fact that \(\pi\) is antisymmetric.  Nondegeneracy is clear, as the map \(\TT_{[X/\pi^\sharp]}\to \LL_{[X/\pi^\sharp]}[1]\) is literally the identity using the above representatives for \(\TT_{[X/\pi^\sharp]}\) and \(\LL_{[X/\pi^\sharp]}\).  For closedness, let
\[
\zeta\in(\calo_{[X/\pi^\sharp]})_1\otimes (\LL_{[X/\pi^\sharp]})_0\cong TX\otimes T^\vee X
\]
again be \(id_{TX}\).  Then \(d_{dR}\zeta=\omega\), so we have \(d_{dR}\omega=0\).  Thus we can take \(0\) as a closedness structure for \(\omega\).  (Note that \(\zeta\) does not define a form on \([X/\pi^\sharp]\), as generally \(d\zeta\neq0\); thus \(\omega\) is not necessarily exact.)

Thus \([X/\pi^\sharp]\) has a canonical \(1\)-shifted symplectic structure.  Looking at \(q:X\to[X/\pi^\sharp]\), we see that \(q^*\omega\) is a form of degree \(1\), so it is zero in \(\wedge^2\LL_X\simeq\wedge^2 T^\vee X\).  Thus \(q\) is isotropic with isotropic structure \(0\).  Further, \(\TT_q\simeq T^\vee X\), and the induced map \(\TT_q\to \LL_X\) is the identity.  So in fact, \(q\) has a Lagrangian structure.  Finally, \(\pi^\sharp_0:T^\vee X\simeq\TT_q\to TX\) is exactly the map \(\pi^\sharp\).

Now consider the case of (2).  The quasi-isomorphism \(\TT_q\to \LL_X\) gives us a map \(\pi^\sharp:T^\vee X\simeq\TT_q\to TX\).  Using the fiber sequence
\[
\xymatrix{T^\vee X\ar[r]^{\pi^\sharp}&TX\ar[r]& q^*\TT_Y},
\]
we have
\[
q^*\TT_Y\simeq\{\xymatrix{T^\vee X\ar[r]^{\pi^\sharp}&TX}\},
\]
with \(TX\) sitting in degree \(0\).  Under this identification, we have
\[
q^*(\wedge^2\LL_Y)\simeq\{\wedge^2T^\vee X\to T^\vee X\otimes TX\to\Sym^2 TX\},
\]
with \(q^*\omega\) corresponding to the identity in \(T^\vee X\otimes TX\).  As before, antisymmetry of \(\pi^\sharp\) is exactly the fact that \(d(q^*\omega)=0\).  In particular, \(\pi^\sharp\) corresponds to a bivector field \(\pi\).  For the Jacobi identity, look at the second infinitesimal neighborhood \(X_{q,2}\) of \(X\) along \(q\).  Its structure sheaf is given by \(\calo_{X_{q,2}}\simeq(\Sym^{\leq2}_{\calo_X}(TX[-1]),[\pi,-])\).  For this to be a dg-algebra, we need \([\pi,[\pi,-]]=\frac{1}{2}[[\pi,\pi],-]=0\) on \(\calo_X\), so \([\pi,\pi]=0\), which is the Jacobi identity.
\end{proof}

To further motivate the definition, we note that the Poisson structure coming from \(q:X\to Y\) depends only on a formal neighborhood \(\hat{Y}_X\) of \(X\) in \(Y\), and that two maps \(X\to Y\) and \(X\to Y'\) give the same Poisson structures if and only if there is a symplectic isomorphism \(\hat{Y}_X\simeq\hat{Y}'_X\) preserving the Lagrangian structures.  Thus we can specify a canonical target \(Y\) by requiring it to be a formal thickening of \(X\).

With this in mind, we define:
\begin{defn}
An \(n\)-shifted Poisson structure on \(X\) is \((Y,\omega,q,h)\), where \(Y\) is a formal derived stack with an \((n+1)\)-shifted symplectic structure \(\omega\in\Symp(Y,n+1)\), \(q:X\to Y\) is a map such that \(X_{red}\to Y_{red}\) is an isomorphism, and \(h\) is a Lagrangian structure on \(q\).
\end{defn}

\begin{rmk}
This definition differs from the one found in \cite{CPTVV}.  However, the two definitions are shown to be equivalent in a forthcoming paper by K. Costello and N. Rozenblyum \cite{CR}.  More specifically, for any reasonable derived stack \(X\), the authors define prestacks of Poisson structures on \(X\) (the \cite{CPTVV} definition) and nil-Lagrangian thickenings of \(X\) (the above definition), and show these are equivalent.
\end{rmk}

We will make frequent implicit use of the following lemma, whose proof is obvious:
\begin{lem}
Let \(X\) be a derived stack and \(Y\) a formal derived stack with an \((n+1)\)-shifted symplectic structure.  Let \(q:X\to Y\) be a map with a Lagrangian structure.  Then \(X\) has an \(n\)-shifted Poisson structure given by \(X\to \hat{Y}_X\).
\end{lem}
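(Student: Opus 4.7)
The natural target to build out of the given data $(Y,\omega,q,h)$ is $Y' = \hat{Y}_X$, the formal neighborhood of $X$ in $Y$; the plan is to produce the tuple $(Y', \omega', q', h')$ satisfying the definition of an $n$-shifted Poisson structure. By construction $\hat{Y}_X$ is a formal derived stack and the canonical map $X \to \hat{Y}_X$ identifies reduced parts, so the last two conditions of the definition are automatic. It remains to build the symplectic form $\omega'$ and the Lagrangian structure $h'$, each by pulling back the corresponding structure from $Y$ along the inclusion $j : \hat{Y}_X \to Y$.

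For the symplectic form, first factor $q$ as $X \xrightarrow{q'} \hat{Y}_X \xrightarrow{j} Y$ using the universal property of the formal completion. Then set $\omega' = j^*\omega$; this is a closed $2$-form of degree $n+1$ on $\hat{Y}_X$ because $j^*$ commutes with the de Rham and cyclic constructions. Nondegeneracy of $\omega'$ follows from the observation that $j$ is an equivalence on (co)tangent complexes when pulled back along any map from a nilpotent thickening of $X_{\mathrm{red}}$: formally, $\LL_{\hat{Y}_X} \otimes_{\calo_{\hat{Y}_X}} \calo_X \simeq \LL_Y \otimes_{\calo_Y} \calo_X$, so the adjoint map $\TT_{\hat{Y}_X} \to \LL_{\hat{Y}_X}[n+1]$ restricted to $X$ agrees with $\TT_Y \to \LL_Y[n+1]$ restricted to $X$, and this is all that's needed to test nondegeneracy on the formal stack $\hat{Y}_X$.

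For the Lagrangian structure, note that $q'^*\omega' = q^*\omega$, so the homotopy $h : 0 \sim q^*\omega$ transports to a homotopy $h' : 0 \sim q'^*\omega'$, i.e.\ an isotropic structure on $q'$. The induced map $\Theta_{h'} : \TT_{q'} \to \LL_X[n]$ is identified with $\Theta_h : \TT_q \to \LL_X[n]$ under the equivalence $\TT_{q'} \simeq \TT_q$ coming from the fact that $j$ induces an equivalence on (co)tangent complexes after pulling back to $X$. Since $\Theta_h$ is a quasi-isomorphism by hypothesis, so is $\Theta_{h'}$, and $h'$ is Lagrangian.

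The only genuine content is the step that asserts that the inclusion of the formal completion $j : \hat{Y}_X \to Y$ is an equivalence on cotangent complexes after restriction along $q'$; this is the reason both the nondegeneracy of $\omega'$ and the Lagrangian condition on $h'$ descend without change. This is a standard property of formal completions of derived stacks, and once invoked the rest of the verification is routine.
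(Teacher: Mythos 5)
Your proof is correct and fills in exactly the argument the paper has in mind: the paper states this lemma with the remark that its ``proof is obvious'' and gives no details, and your elaboration (factor \(q\) through \(\hat{Y}_X\), pull back \(\omega\) along the inclusion, and observe that the formal completion agrees with \(Y\) on cotangent complexes after restriction to \(X\), so both nondegeneracy and the Lagrangian condition transfer unchanged) is the standard verification being implicitly invoked. You correctly identify the one nontrivial input, the equivalence \(\LL_{\hat{Y}_X}\otimes_{\calo_{\hat{Y}_X}}\calo_X\simeq\LL_Y\otimes_{\calo_Y}\calo_X\), which is indeed a standard property of formal completions.
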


We will say \(X\) is Poisson over \(Y\).

\begin{rmk}
As mentioned earlier, if \(\bullet_{(n+1)}\) denotes the point with trivial \((n+1)\)-shifted symplectic structure, then a symplectic structure on a derived Artin stack \(X\) is the same as a Lagrangian structure on the map \(X\to \bullet_{(n+1)}\).  Thus every symplectic structure is naturally Poisson.
\label{rmk:sympois}
\end{rmk}

\begin{rmk}
Let \(X\) be a smooth variety and \(\pi=0\) the zero Poisson structure.  Then the space \([X/\pi^\sharp]\) constructed in the proof of Theorem \ref{thm:Poisclas} is the shifted cotangent space \(T^{\vee}X[1]\).

For nonzero \(\pi\), suppose the (classical) moduli space of symplectic leaves \(Y\) is a derived Deligne-Mumford stack.  Suppose there is a closed \(2\)-form \(\omega\) on \(X\) whose pullback to any symplectic leaf is the form induced by \(\pi\).  Then the map \(X\to T^\vee Y[1]\) with Lagrangian structure \(\omega\) yields the Poisson structure \(\pi\).
\end{rmk}

Now consider a smooth variety \(X\) with Poisson structure \(\pi\), which we consider in terms of the \(1\)-shifted symplectic structure \(\omega\) on some \(Y\) and the Lagrangian structure on \(q:X\to Y\).  Let us now characterize coisotropic subvarieties of \(X\) in terms of the map \(q\) with its Lagrangian structure.

\begin{thm}
Let \(X\) be a smooth variety.  Let \(Y\) be a formal derived stack with \(1\)-shifted symplectic structure \(\omega\), and let \(q:X\to Y\) be a map with Lagrangian structure \(h\).  Let \(\pi\) be the resulting Poisson structure.
\begin{enumerate}
\item Suppose that \(W\) is a coisotropic subvariety of \(X\), and let \(s:W\to X\) be the inclusion.  Then there exists a formal derived stack \(X'\) and maps \(s':W\to X'\), \(q':X'\to Y\), such that \(q'\) has a Lagrangian structure, \(q\circ s=q'\circ s'\), and the induced map \(a:W\to P:=X'\times_YX\) has a Lagrangian structure.
\item Conversely, say \(s:W\to X\) is a subvariety, and suppose there exist a formal derived stack \(X'\), maps \(s':W\to X'\), \(q':X'\to Y\), a Lagrangian structure on \(q'\), a homotopy \(q\circ s\sim q'\circ s'\), and a Lagrangian structure on \(a:W\to P:=X'\times_YX\).  Then \(W\) is coisotropic in \(X\).
\end{enumerate}
\end{thm}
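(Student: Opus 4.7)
The plan is to closely mirror the construction in the proof of Theorem~\ref{thm:Poisclas}, treating \(s:W\hookrightarrow X\) as a relative analog of the Poisson structure on \(X\) itself and building a formal derived leaf space for the conormal Lie algebroid \(N^*_{W/X}\).

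For part (1), the coisotropic condition says that \(\pi^\sharp\) restricts to a map \(N^*_{W/X}\to TW\), endowing \(N^*_{W/X}\) with a Lie algebroid structure (bracket inherited from the Schouten bracket with \(\pi\)). Mimicking the construction of \([X/\pi^\sharp]\) from Theorem~\ref{thm:Poisclas}, I build \(X':=[W/\pi^\sharp_W]\) as the formal derived quotient with structure sheaf \(\calo_{X'}\simeq(\Sym_{\calo_W}^\bullet(N_{W/X}[-1]),[\pi,-])\), equipped with a canonical map \(s':W\to X'\). The universal property of \(Y=[X/\pi^\sharp]\), applied to the inclusions \(W\hookrightarrow X\) and \(N^*_{W/X}\hookrightarrow T^\vee X|_W\), then produces a canonical map \(q':X'\to Y\) together with a homotopy \(q\circ s\simeq q'\circ s'\). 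Computing as in Theorem~\ref{thm:Poisclas} yields \(\TT_{X'}\simeq[N^*_{W/X}\to TW]\) (with \(TW\) in degree \(0\)) and dually \(\LL_{X'}\simeq[T^\vee W\to N_{W/X}]\); the fiber sequence \(\TT_{q'}\to\TT_{X'}\to(q')^*\TT_Y|_W\) identifies \(\TT_{q'}\) with \(\LL_{X'}\) via a map that is essentially the identity in these representatives, yielding the Lagrangian structure on \(q'\).

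Having produced the Lagrangian \(q'\), Theorem~\ref{thm:Lagrint} promotes \(P:=X'\times_Y X\) to a canonical \(0\)-shifted symplectic formal stack, and it is a formal thickening of \(W\). The map \(a:W\to P\) is equipped by construction with the isotropic homotopy arising from matching the Lagrangian data on \(q\) and \(q'\). A direct tangent-complex calculation, using that \(\TT_P|_W\) assembles as the homotopy fiber of \(\TT_{X'}|_W\oplus TX|_W\to\TT_Y|_W\), shows \(\TT_{W/P}\simeq\LL_W[-1]\), yielding the Lagrangian structure on \(a\).

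Part (2) reverses this computation. From the Lagrangian structure on \(q':X'\to Y\) (so \(\TT_{q'}\simeq\LL_{X'}\)) and the base-change identification \(\TT_{P/X'}|_W\simeq\TT_{X/Y}|_W\simeq T^\vee X|_W\), the Lagrangian structure on \(a\) forces, via the fiber sequence \(\TT_{W/X'}\to\TT_{W/P}\to T^\vee X|_W\), an identification of \(\TT_{W/X'}\) with (a shift of) \(N^*_{W/X}\) and makes the composite \(N^*_{W/X}\hookrightarrow T^\vee X|_W\xrightarrow{\pi^\sharp}TX|_W\to N_{W/X}\) null-homotopic. Since these sheaves are locally free and concentrated in a single degree, the composite vanishes strictly, which is the coisotropicity of \(W\) in \(X\). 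The main obstacle in both directions is the tangent-complex bookkeeping around \(P\): one must match the closedness homotopies from the two Lagrangians on \(q\) and \(q'\) against the closed \(2\)-form on \(P\) produced by Theorem~\ref{thm:Lagrint}, and verify that the isotropic homotopy \(a^*\omega_P\simeq 0\) coming from these matches the homotopy demanded by (respectively, induced from) the coisotropic structure.
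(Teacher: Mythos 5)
Your proposal follows essentially the same route as the paper: form the formal quotient \(X'=[W/\pi^\sharp_W]\) of \(W\) by the conormal algebroid, descend \(W\to X\to Y\) to \(q'\), verify the Lagrangian conditions on \(q'\) and on \(a:W\to P\) via the same tangent-complex identifications (\(\TT_{X'}\simeq\{N^\vee_{W|X}\to TW\}\), \(\TT_{q'}\simeq\{T^\vee W\to N_{W|X}\}\), \(\TT_a\simeq T^\vee W[-1]\)), and reverse the computation for the converse. The only quibbles are cosmetic: the descent producing \(q'\) should be attributed to the universal property of \(X'\) rather than of \(Y\) (the theorem's \(Y\) is arbitrary, not necessarily \([X/\pi^\sharp]\)), and the fiber sequence you use in part (2) should read \(\TT_{W/P}\to\TT_{W/X'}\to a^*\TT_{P/X'}\).
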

\begin{proof}
For (1), let \(s:W\to X\) be a coisotropic subvariety.  That is, \(W\) is also a smooth variety, and the Poisson structure restricted to the conormal bundle \(N^\vee_{W|X}\to T^\vee X\to TX\) factors through the tangent space \(TW\) of \(W\).  Let the adjoint of \(N^\vee_{W|X}\to TW\) be \(\pi^\sharp_W:T^\vee W\to N_{W|X}\); one can show that the morphism of mixed graded cdgas induced by \(\pi^\sharp\) descends to \(\pi^\sharp_W\), so we have a formal quotient \(X':=[W/\pi^\sharp_W]\), with a projection \(s':W\to[W/\pi^\sharp_W]\).  From the universal property of \([W/\pi^\sharp_W]\) there is a natural map \(q':[W/\pi^\sharp_W]\to Y\) descending from \(W\to X\to Y\).

We can write
\[
\TT_{[W/\pi^\sharp_W]}\simeq\{N^\vee_{W|X}\to TW\},
\]
with \(TW\) in degree \(0\); thus,
\[
\wedge^2\LL_{[W/\pi^\sharp_W]}\simeq\{\wedge^2T^\vee W\to T^\vee W\otimes N_{W|X}\to \Sym^2N_{W|X}\}.
\]
I claim that \((q')^*\omega=0\).  To see this, recall that the pullback of \(\omega\) to \((q')^*\wedge^2\LL_Y\) is the element of \(T^\vee X\otimes TX\) corresponding to the identity on \(TX\).  But \(\Hom(TX,TX)\to\Hom(TW,N_{W|X})\) sends the identity to the composition \(TW\to TX\to N_{W|X}\), which is \(0\) by definition.  Thus \(q\) has isotropic structure \(0\).  Further, we have
\[
\TT_q\simeq\{T^\vee W\to N_{W|X}\},
\]
with \(T^\vee W\) sitting in degree \(0\).  The map \(\TT_q\to \LL^\vee_{[W/\pi^\sharp_W]}\) is clearly an isomorphism, so the isotropic structure on \(q\) is Lagrangian.  Then \(P=[W/\pi^\sharp_W]\times_{Y}X\) is a Lagrangian intersection, so it has a \(0\)-shifted symplectic structure.  One can check that \(\TT_P\) is an extension
\[
0\to TW\to\TT_P\to T^\vee W\to 0.
\]
Let \(a:W\to P\) be map induced by \(s'\) and \(s\); if \(\omega_P\) is the symplectic form on \(P\), then \(a^*\omega_P=0\), so \(a\) is isotropic (with isotropic structure \(0\)).  However, we have \(\TT_a\simeq T^\vee W[-1]\), and \(\TT_a\to \LL_W[-1]\) is the identity.  So in fact \(a:W\to P\) is Lagrangian.

For (2), let \(pr_1:P\to X'\) be the projection.  Then we have an exact sequence
\[
\TT_a\to \TT_s\to a^*\TT_{pr_1}.
\]
Using \(\TT_a\simeq\LL_W[-1]\) from the Lagrangian structure, and \(\TT_{pr_1}\simeq pr_2^*\TT_q\simeq pr_2^*T^\vee X\), we have
\[
T^\vee W[-1]\to\TT_s\to s^*T^\vee X,
\]
so that \(\TT_s\simeq N^\vee_{W|X}\).  Further, the diagram
\[
\xymatrix{
\TT_s\ar[r]\ar[d]& TW\ar[d]\\
s^*\TT_q\ar[r]& s^*TX
}
\]
commutes, that is, the Poisson map \(N^\vee_{W|X}\to T^\vee X\to TX\) factors through \(N^\vee_{W|X}\to TW\).  So \(W\) is coisotropic in the usual sense.
\end{proof}

Once again, the coisotropic structure depends only on a formal neighborhood of \(W\).

This leads us to define coisotropic structures in general:

\begin{defn}
Let \(X\) be a derived Artin stack with \(n\)-shifted Poisson structure given by \(f:X\to Y\).  Let \(W\) be a derived Artin stack with a map \(g:W\to X\).  A \emph{coisotropic structure} on \(g\) consists of \((X', f', \alpha, s, \beta)\), with:
\begin{itemize}
\item \(X'\) a formal derived stack
\item \(f':X'\to Y\)
\item \(\alpha\) a Lagrangian structure on \(f'\)
\item A lifting of \(g\) to \(s:W\to X\times_Y X'\) such that \(W_{red}\to X'_{red}\) is an isomorphism.
\item \(\beta\) a Lagrangian structure on the map \(s\).  (\(Y'\times_YX\) has an \(n\)-shifted symplectic form by Theorem \ref{thm:Lagrint}.)
\end{itemize}
\end{defn}

Again, we will implicitly use the following lemma:
\begin{lem}
Let \(W\) and \(X\) be derived stacks, and \(X'\) and \(Y\) formal derived stacks.  Say \(Y\) has an \((n+1)\)-shifted symplectic structure and say we have maps \(X\to Y\), \(X'\to Y\), and \(W\to X\times_Y X'\) with Lagrangian structures.

Then giving \(X\) the Poisson structure \(X\to \hat{Y}_X\), the map \(W\to X\) has a coisotropic structure given by the obvious maps \(\hat{X}'_W\to \hat{Y}_X\) and \(W\to \hat{X}'_W\times_{\hat{Y}_X}X\), with the natural symplectic and Lagrangian structures.
\end{lem}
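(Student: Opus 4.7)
The plan is to verify that each component of the coisotropic structure definition can be read off directly from the given Lagrangian data by passing to formal neighborhoods. Concretely, I need to produce a formal derived stack $X''$, a map $f'':X''\to \hat{Y}_X$ equipped with a Lagrangian structure, a lift of the given $W\to X$ to a map $s:W\to X\times_{\hat{Y}_X}X''$ satisfying $W_{red}\xrightarrow{\sim}X''_{red}$, and a Lagrangian structure on $s$.

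First, I would set $X'':=\hat{X}'_W$ and define $f''$ by restricting the given $X'\to Y$ to these formal neighborhoods. This restriction makes sense because the map $W\to X'\to Y$ factors through $X\to Y$ (via the second and first projections of $W\to X\times_Y X'$), so by functoriality of formal completion the induced map lands in $\hat{Y}_X$. The reducedness condition $(\hat{X}'_W)_{red}=W_{red}$ is automatic from the construction of the formal completion along $W$.

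Next, I would transport the two Lagrangian structures through the formal completions. Each Lagrangian datum consists of an isotropic null-homotopy together with a quasi-isomorphism between a tangent complex and a shifted cotangent complex; both ingredients are computed infinitesimally along the completion locus, so they restrict without change to give Lagrangian structures on $\hat{X}'_W\to \hat{Y}_X$ and on $s:W\to X\times_{\hat{Y}_X}\hat{X}'_W$. For the latter I need to know that the symplectic form on the Lagrangian intersection $X\times_{\hat{Y}_X}\hat{X}'_W$ produced by Theorem \ref{thm:Lagrint} matches the restriction of the symplectic form on $X\times_Y X'$ along the natural map between these intersections; this follows because Theorem \ref{thm:Lagrint} constructs the form purely from tangent-complex data along the intersection, and the intersection already sits inside the formal neighborhoods.

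The only step with any real content is the compatibility between Lagrangian intersections and formal completion of the base symplectic target, and I expect this to be entirely formal — any test map from a nil-thickening into $X\times_Y X'$ factors through $X\times_{\hat{Y}_X}\hat{X}'_W$, so the two fiber products agree as formal stacks near $W$, and the symplectic-plus-Lagrangian data transport along this equivalence. This matches the author's remark that the proof is obvious: the lemma is really just the statement that all of the structures in play (symplectic forms, isotropic homotopies, nondegeneracy) are invariants of formal neighborhoods, so taking $\hat{X}'_W$ and $\hat{Y}_X$ in place of $X'$ and $Y$ changes nothing essential.
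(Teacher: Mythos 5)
Your proposal is correct and is exactly the argument the paper intends: the paper states this lemma without proof (treating it, like its companion, as obvious), and your elaboration --- take $X''=\hat{X}'_W$, note the reducedness condition is automatic, and observe that all the symplectic, isotropic, and nondegeneracy data are computed from (co)tangent complexes and forms restricted to the completion locus, hence transport unchanged to the formal neighborhoods and to the fiber product over $\hat{Y}_X$ --- is precisely the reasoning being left implicit. No substantive difference from the paper's approach.
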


We will say \(W\to X\) is coisotropic over \(X'\to Y\).

\begin{rmk}
It is clear from the definition that if \(X\) has an \(n\)-shifted symplectic structure, considered as an \(n\)-shifted Poisson structure via \(X\to\bullet_{(n+1)}\), then any Lagrangian morphism \(Y\to X\) is also coisotropic over \(\bullet_{n}\to\bullet_{(n+1)}\).
\end{rmk}

Now we can begin proving results for Poisson and coisotropic structures analogous to those of section 2.  It is generally easier to first state them in the language of symplectic structures and Lagrangian morphisms.

\begin{lem}
Let \(X_1, X_2, X_3, Y\) be derived Artin stacks, and let \(\omega\in\Symp(Y,n)\) be an \(n\)-shifted symplectic structure on \(Y\).  For \(i=1,2,3\), let \(f_i:X_i\to Y\) be a morphism with Lagrangian structure \(h_i\).  Note that any product \(X_i\times_Y X_j\) has a canonical \((n-1)\)-shifted symplectic structure.  Let \(g_{12}:L_{12}\to X_1\times_Y X_2\) and \(g_{23}:L_{23}\to X_2\times_Y X_3\) be morphisms with Lagrangian structures \(k_{12}\), \(k_{23}\) respectively.

Then \(g_{13}:L_{13}:=L_{12}\times_{X_2}L_{23}\to X_1\times_{Y}X_3\) has a canonical Lagrangian structure.
\label{lem:Lagrcorr}
\end{lem}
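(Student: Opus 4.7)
The plan is to construct on \(g_{13}:L_{13}\to P_{13}:=X_1\times_Y X_3\) first an isotropic structure, and then to verify its nondegeneracy. Recall from the construction used in Theorem \ref{thm:Lagrint} that the \((n-1)\)-shifted symplectic form \(R(\omega,h_i,h_j)\) on \(P_{ij}:=X_i\times_Y X_j\) is built out of the two nullhomotopies \(\pi_i^*h_i\) and \(\pi_j^*h_j\) of the identified pullback \(\pi_i^*f_i^*\omega\simeq \pi_j^*f_j^*\omega\); informally it is the loop \([\pi_i^*h_i - \pi_j^*h_j]\) descending to a closed 2-form of degree \(n-1\). The key bookkeeping observation is that on any stack \(Z\) equipped with compatible maps to all three \(X_i\) (so mapping to each \(P_{ij}\)), the pullbacks of the three symplectic forms satisfy the telescope identity \([h_1-h_3]|_Z\simeq [h_1-h_2]|_Z + [h_2-h_3]|_Z\), the \(h_2\) contributions canceling.

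First I would construct the isotropic structure. The stack \(L_{13}=L_{12}\times_{X_2}L_{23}\) carries compatible maps to \(X_1,X_2,X_3\) via the projections to \(L_{12}\) and \(L_{23}\), and hence factors through all three \(P_{ij}\). By the telescope identity, the pullback \(g_{13}^*R(\omega,h_1,h_3)\) decomposes as the sum of the pullbacks of \(R(\omega,h_1,h_2)\) and \(R(\omega,h_2,h_3)\). The first summand factors through \(L_{12}\), where \(k_{12}\) supplies a nullhomotopy; the second factors through \(L_{23}\), where \(k_{23}\) does. The sum of these pulled back to \(L_{13}\) is the desired isotropic structure \(k_{13}\) on \(g_{13}\).

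The remaining and main task is to verify nondegeneracy of the induced map \(\Theta_{k_{13}}:\TT_{g_{13}}\to\LL_{L_{13}}[n-2]\). I would combine the fiber sequence
\[
\TT_{L_{13}}\to \TT_{L_{12}}|_{L_{13}}\oplus \TT_{L_{23}}|_{L_{13}}\to \TT_{X_2}|_{L_{13}}
\]
coming from \(L_{13}=L_{12}\times_{X_2}L_{23}\) with
\[
g_{13}^*\TT_{P_{13}}\to \TT_{X_1}|_{L_{13}}\oplus \TT_{X_3}|_{L_{13}}\to \TT_Y|_{L_{13}}
\]
coming from \(P_{13}=X_1\times_Y X_3\), to express \(\TT_{g_{13}}\) as an iterated fiber of a square of tangent complexes. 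Dualizing using the Lagrangian duality quasi-isomorphisms \(\TT_{g_{12}}\simeq \LL_{L_{12}}[n-2]\), \(\TT_{g_{23}}\simeq \LL_{L_{23}}[n-2]\), and \(\TT_{f_i}\simeq \LL_{X_i}[n-1]\), one identifies the same iterated fiber with \(\LL_{L_{13}}[n-2]\), and checks that \(\Theta_{k_{13}}\), built from the sum \(\pi_{L_{12}}^*k_{12}+\pi_{L_{23}}^*k_{23}\), is exactly this identification. The hard part is this coherence check: one must confirm that the duality isomorphisms produced by the three distinct Lagrangian structures \(h_2,k_{12},k_{23}\) are compatible with one another under the various pullbacks so that they assemble into a single quasi-isomorphism rather than differing by a nontrivial obstruction. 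Once this coherence is in place the rest of the verification is a routine diagram chase.
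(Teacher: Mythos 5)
Your proposal matches the paper's proof in both steps: the isotropic structure is obtained by concatenating \(\pi_a^*k_{12}\) and \(\pi_b^*k_{23}\) with the extra homotopy \(\eta_{ab}^*h_2\) that effects your ``telescope'' cancellation of the \(h_2\) terms, and nondegeneracy is proved by exactly the two-out-of-three argument you describe, applied to the map of fiber sequences \(\TT_{L_{13}}\to\TT_{L_{12}}\oplus\TT_{L_{23}}\to\TT_{X_2}\) into \(\LL_{g_{13}}[n-2]\to(\LL_{g_{12}}\oplus\LL_{g_{23}})[n-2]\to\LL_{f_2}[n-1]\). The coherence you flag as the hard part is precisely what that diagram packages: the two right-hand vertical maps are the (dualized) Lagrangian quasi-isomorphisms coming from \(k_{12},k_{23}\) and \(h_2\), the squares commute by construction of the isotropic structure, and the induced map on fibers is \(\Theta_{k_{13}}\).
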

\begin{proof}
Let \(\pi_1:L_{12}\to X_1\) and \(\pi_2:L_{12}\to X_2\) be the projections, and let \(\eta_{12}:f_1\circ\pi_1\to f_2\circ\pi_2\) be the natural equivalence of morphisms.  If \(\omega_{12}\in\Symp(X_1\times_YX_2,n-1)\) is the symplectic form given by Theorem \ref{thm:Lagrint}, then \(g_{12}^*\omega_{12}=\pi_1^*h_1+\eta_{12}^*\omega-\pi_2^*h_2\).  Then \(k_{12}\) gives a path from \(0\) to this form.  Similarly if \(\pi_2':L_{23}\to X_2\) and \(\pi_3:L_{23}\to X_3\) are the projections and \(\eta_{23}:f_2\circ\pi_2'\to f_3\circ\pi_3\) the equivalence of morphisms, then \(k_{23}\) is a path from \(0\) to \((\pi_2')^*h_2+\eta_{23}^*\omega-\pi_3^*h_3\) in \(\mathcal{A}^{2,cl}(L_{23},n-1)\).
\[
\xymatrix{&&L_{13}\ar[ld]_{\pi_a}\ar[rd]^{\pi_b}\ddtwocell\omit{^\eta_{ab}}\\
&L_{12}\ar[ld]_{\pi_1}\ar[rd]^{\pi_2}\ddtwocell\omit{^<-2>\eta_{12}}&&L_{23}\ar[ld]_{\pi_2'}\ar[rd]^{\pi_3}\ddtwocell\omit{^<2>\eta_{23}}\\
X_1\ar[rrd]^{f_1}&&X_2\ar[d]^{f_2}&&X_3\ar[lld]_{f_3}\\
&&Y&
}
\]
Now let \(\pi_a:L_{13}\to L_{12}\) and \(\pi_b:L_{13}\to L_{23}\) be the projections and \(\eta_{ab}:\pi_2\circ\pi_a\to\pi'_2\circ\pi_b\) the natural equivalence.  Then in \(\mathcal{A}^{2,cl}(L_{13},n-1)\) we have paths
\begin{align*}
\pi_a^*k_{12}&:0\sim \pi_a^*\pi_1^*h_1+\pi_a^*\eta_{12}^*\omega-\pi_a^*\pi_2^*h_2\\
\pi_b^*k_{23}&:0\sim \pi_b^*(\pi_2')^*h_2+\pi_b^*\eta_{23}^*\omega-\pi_b^*\pi_3^*h_3\\
\eta_{ab}^*h_2&:0\sim \pi_a^*\pi_2^*h_2+\eta_{ab}^*f_2^*\omega-\pi_b^*(\pi_2')^*h_2.
\end{align*}
Composing these gives
\begin{equation}
0\sim \pi_a^*\pi_1^*h_1+\eta_{13}^*\omega-\pi_b^*\pi_3^*h_3,
\label{eq:Lagrcorr}
\end{equation}
where
\[
\eta_{13}=(\pi_b\eta_{23})\circ(\eta_{ab}f_2)\circ(\pi_a\eta_{12}):\pi_b\circ\pi_3\circ f_3\to\pi_a\circ\pi_1\circ f_1
\]
is the equivalence.  If \(\omega_{13}\in\Symp(X_1\times_YX_3,n-1)\) is the symplectic form, then (\ref{eq:Lagrcorr}) is exactly the isotropic structure \(0\sim g_{13}^*\omega_{13}\) we need.

For Lagrangianness, we use the diagram
\[
\xymatrix{\TT_{L_{13}}\ar[r]\ar[d]&\TT_{L_{12}}\oplus\TT_{L_{23}}\ar[r]\ar[d]^[right]{\sim}&\TT_{X_2}\ar[d]^[right]{\sim}\ar[r]|-{/}&\\
\LL_{g_{13}}[n-2]\ar[r]&(\LL_{g_{12}}\oplus\LL_{g_{23}})[n-2]\ar[r]&\LL_{f_2}[n-1]\ar[r]|-{/}&
}.
\]
The rows are exact and two of the three vertical maps are quasi-isomorphisms, so the third is as well.
\end{proof}
Restated in Poisson language, the above is a generalization of Theorem \ref{thm:Lagrint}:
\begin{cor}
Let \(X\) have an \(n\)-shifted Poisson structure given by \(f:X\to Y\).  For \(i=1,2\), let \(g_i:X'_i\to X\) be coisotropic over \(h_i:Y'_i\to Y\).  Then \(X_1\times_X X_2\) is \((n-1)\)-shifted Poisson over \(Y_1\times_Y Y_2\).
\label{cor:coisoint}
\end{cor}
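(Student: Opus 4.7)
The corollary is essentially the restatement of Lemma \ref{lem:Lagrcorr} under the correspondence between shifted Poisson structures and Lagrangian morphisms to formal symplectic targets, so the plan is to apply that lemma directly and then verify that the output has the formal-target and reduced-isomorphism features required of a Poisson structure.

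First, I would unpack the data. The $n$-shifted Poisson structure on $X$ supplies an $(n+1)$-shifted symplectic formal derived stack $Y$ with $f:X\to Y$ Lagrangian and $X_{red}\simeq Y_{red}$. Each coisotropic structure on $g_i:X'_i\to X$ over $h_i:Y'_i\to Y$ supplies: a formal derived stack $Y'_i$, a Lagrangian structure on $h_i$, a lift $s_i:X'_i\to X\times_Y Y'_i$ with $(X'_i)_{red}\simeq (Y'_i)_{red}$, and a Lagrangian structure on $s_i$.

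Then I would apply Lemma \ref{lem:Lagrcorr} with $X_1=Y'_1$, $X_2=X$, $X_3=Y'_2$, using the Lagrangian maps $h_1$, $f$, $h_2$ into the $(n+1)$-shifted symplectic target $Y$, and the Lagrangian correspondences $L_{12}=X'_1\to Y'_1\times_Y X$ and $L_{23}=X'_2\to X\times_Y Y'_2$ coming from the coisotropic data. By Theorem \ref{thm:Lagrint}, $Y'_1\times_Y Y'_2$ carries an $n$-shifted symplectic structure, and the lemma then produces a Lagrangian structure on the composed correspondence
\[
X'_1\times_X X'_2\to Y'_1\times_Y Y'_2.
\]

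To conclude that this is an $(n-1)$-shifted Poisson structure in the sense of the definition, I would verify: (i) the target (or its formal completion along the source) is a formal derived stack, which is immediate since both factors are formal over $Y$; and (ii) the composed map is a reduced isomorphism, which follows from the individual reduced isomorphisms $X_{red}\simeq Y_{red}$ and $(X'_i)_{red}\simeq (Y'_i)_{red}$. The main source of potential trouble is the bookkeeping around formal completions — that is, checking that replacing $Y$, $Y'_i$, and $Y'_1\times_Y Y'_2$ by the appropriate formal thickenings of $X$, $X'_i$, and $X'_1\times_X X'_2$ respects all the Lagrangian structures. This should be handled exactly as in the lemma immediately preceding the definition of coisotropic structure, where the analogous completion step is absorbed without incident; the substantive geometric content of the proof lies entirely in Lemma \ref{lem:Lagrcorr}.
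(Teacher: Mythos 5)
Your proposal is correct and matches the paper's approach: the paper offers no separate argument for this corollary, presenting it as the direct restatement of Lemma \ref{lem:Lagrcorr} in Poisson language, which is precisely the application you spell out (with $X_1=Y'_1$, $X_2=X$, $X_3=Y'_2$ and the coisotropic lifts as the Lagrangian correspondences). Your additional checks on formal completions and reduced isomorphisms are handled in the paper by the ``implicit use'' lemma preceding the definition of coisotropic structures, exactly as you indicate.
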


Now let us generalize the situation of mapping spaces.  It is relatively clear that Lagrangian structures descend to mapping spaces:
\begin{thm}[\cite{Calaque}, Theorem 2.10]
Let \(X, Y, Z\) be derived Artin stacks and \(f:Y\to Z\) a map.  Assume \(X\) is \(\calo\)-compact with \(d\)-orientation \([X]\).  Assume the stacks \(\Map(X,Y)\) and \(\Map(X,Z)\) are derived Artin stacks locally of finite presentation over \(k\).  Then we have a map
\[
\int_{[X]}ev^*(-):\Lagr(f,\omega)\to\Lagr(f\circ-,\int_{[X]}ev^*(\omega)),
\]
that is, from Lagrangian structures on \(f\) to Lagrangian structures on \((f\circ-)\).
\label{thm:Lagrmap}
\end{thm}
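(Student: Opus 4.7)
The plan is to mimic the construction of the shifted symplectic form on \(\Map(X,Z)\) from Theorem \ref{thm:sympmapthm} at the level of isotropic structures, and then to verify nondegeneracy by combining the Lagrangian condition on \(f\) with the Serre-type duality built into the \(d\)-orientation. Given the Lagrangian structure \(h : 0 \sim f^*\omega\) on \(f\), let \(e_Y : X \times \Map(X,Y) \to Y\) and \(e_Z : X \times \Map(X,Z) \to Z\) be the evaluation morphisms. The commuting square \(f \circ e_Y \simeq e_Z \circ (\mathrm{id}_X \times (f\circ-))\) identifies \(e_Y^* f^* \omega\) with \((\mathrm{id}_X \times (f\circ-))^* e_Z^* \omega\), so pulling back \(h\) along \(e_Y\) yields a homotopy \(e_Y^* h : 0 \sim e_Y^* f^* \omega\) of closed \(2\)-forms of degree \(n\) on \(X \times \Map(X,Y)\). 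Applying \(\int_{[X]}\) produces a homotopy \(\int_{[X]} e_Y^* h : 0 \sim (f\circ-)^* \int_{[X]} e_Z^* \omega\) in \(\cala^{2,cl}(\Map(X,Y), n-d)\), which is the candidate isotropic structure on \(f \circ -\).

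For the nondegeneracy condition, fix a point \(g : X \to Y\) of \(\Map(X,Y)\). The relative tangent complex of the map \(f\circ-\) satisfies \(\TT_{(f\circ-)}|_g \simeq \RR\Gamma(X, g^* \TT_f)\). The Lagrangian structure on \(f\) furnishes a quasi-isomorphism \(\Theta_h : \TT_f \to \LL_Y[n-1]\), so after applying \(\RR\Gamma(X, g^*(-))\) one obtains \(\TT_{(f\circ-)}|_g \simeq \RR\Gamma(X, g^* \LL_Y)[n-1]\). On the other hand, the construction underlying Theorem \ref{thm:sympmapthm} uses the \(d\)-orientation as a Serre-type duality: the pairing \(\RR\Gamma(X, g^*\TT_Y) \otimes \RR\Gamma(X, g^*\LL_Y) \to \RR\Gamma(X, \calo_X) \xrightarrow{[X]} k[-d]\) is perfect, producing an identification \(\LL_{\Map(X,Y)}|_g \simeq \RR\Gamma(X, g^* \LL_Y)[d]\) and hence \(\LL_{\Map(X,Y)}[n-d-1]|_g \simeq \RR\Gamma(X, g^* \LL_Y)[n-1]\), which matches the above computation of \(\TT_{(f\circ-)}|_g\).

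The main step, and the likely main obstacle, is to verify that the map \(\TT_{(f\circ-)} \to \LL_{\Map(X,Y)}[n-d-1]\) produced by the integrated isotropic structure actually agrees, under the identifications above, with the map obtained by applying \(\RR\Gamma(X, g^*(-))\) to \(\Theta_h\) and transporting it through the Serre duality of the orientation. This is fundamentally a naturality statement: both the \(\Theta\)-construction attached to an isotropic structure and the operation \(\int_{[X]} e_Y^*(-)\) are built from formal manipulations of homotopies of closed \(2\)-forms, so their interaction is functorial in \(h\). Once this compatibility is pinned down, the quasi-isomorphism \(\Theta_h\) transfers through the perfect orientation pairing to a quasi-isomorphism on \(\Map(X,Y)\), which is exactly the nondegeneracy demanded by the Lagrangian condition on \(f \circ -\). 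Packaging these constructions as morphisms of spaces of structures, rather than working with single Lagrangian data, then yields the claimed map \(\int_{[X]} ev^*(-) : \Lagr(f,\omega) \to \Lagr(f\circ-, \int_{[X]} ev^*(\omega))\).
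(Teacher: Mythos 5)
The paper does not prove this statement; it is quoted verbatim from \cite{Calaque} (Theorem 2.10), so there is no internal proof to compare against. Your argument is essentially the standard one from that reference: the isotropic structure is $\int_{[X]}ev^*h$ obtained from the compatibility of the evaluation maps, and nondegeneracy follows by factoring $\Theta_{\int_{[X]}ev^*h}$ as $\RR\Gamma(X,g^*\Theta_h)$ composed with the perfect pairing coming from the $d$-orientation. The one place where you wave your hands --- the claim that the map $\TT_{(f\circ-)}\to\LL_{\Map(X,Y)}[n-d-1]$ induced by the integrated homotopy really is that composite --- is indeed the technical heart of Calaque's proof and would need to be checked at the level of the explicit models for $\TT_f$ and the negative cyclic complex, but identifying it as a naturality statement is the right diagnosis, and the degree bookkeeping in your duality argument is correct.
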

Again, using the language of Poisson structures, we have
\begin{cor}
Let \(Y\) have an \(n\)-shifted Poisson structure over \(Z\).  Let \(X\) be \(\calo\)-compact with \(d\)-orientation \([X]\).  Assume the stacks \(\Map(X,Y)\) and \(\Map(X,Z)\) are derived Artin stacks locally of finite presentation over \(k\).  Then \(\Map(X,Y)\) has an \((n-d)\)-shifted Poisson structure over \(\Map(X,Z)\).
\label{cor:Poismap}
\end{cor}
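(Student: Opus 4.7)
The approach is to unpack the Poisson-over-$Z$ hypothesis into its symplectic and Lagrangian constituents, apply the mapping-stack theorems already established for each, and then verify that the resulting data satisfies the formal-thickening axioms of a Poisson structure.

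By the definition recalled above, $Y$ being $n$-shifted Poisson over $Z$ means that $Z$ is a formal derived stack equipped with an $(n+1)$-shifted symplectic form $\omega$, together with a Lagrangian structure $h$ on a morphism $q:Y\to Z$ for which $Y_{red}\to Z_{red}$ is an equivalence. The first step is to apply Theorem \ref{thm:sympmapthm} to $(Z,\omega)$ using the $d$-orientation $[X]$; this produces an $(n+1-d)$-shifted symplectic form $\int_{[X]}ev^*\omega$ on $\Map(X,Z)$. The second step is to apply Theorem \ref{thm:Lagrmap} to the Lagrangian morphism $(q,h)$, which upgrades the post-composition map $(q\circ -):\Map(X,Y)\to\Map(X,Z)$ to a Lagrangian morphism relative to the symplectic form of the previous step. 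Both steps are direct invocations of the cited results, and both require only the hypothesis that the two mapping stacks are derived Artin stacks locally of finite presentation, which is given.

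It remains to verify the formal-thickening axioms of Definition \ref{defn} (the one for Poisson structures): that $\Map(X,Z)$ is a formal derived stack and that $\Map(X,Y)_{red}\to\Map(X,Z)_{red}$ is an isomorphism. The second assertion is immediate: since $q_{red}$ is already an equivalence, post-composition identifies both reduced mapping stacks with $\Map(X,Y_{red})=\Map(X,Z_{red})$. The first assertion is the only genuine bookkeeping point: one must check that $\Map(X,-)$ sends a formal thickening $Z\supset Z_{red}$ to a formal thickening $\Map(X,Z)\supset\Map(X,Z_{red})$. This is verified on the functor of points — for $T$ an Artinian test algebra, a $T$-point of $\Map(X,Z)$ is an $X_T$-point of $Z$, and the nilpotent radical of $T$ forces such a point into an infinitesimal neighborhood of $\Map(X,Z_{red})$; $\calo$-compactness of $X$ and local finite presentation of $\Map(X,Z)$ ensure that this infinitesimal bookkeeping behaves correctly.

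The principal obstacle, such as it is, lies in the last paragraph: the formality of $\Map(X,Z)$ is not a statement covered by the quoted theorems and must be checked by hand. Once that is done, the symplectic and Lagrangian pieces are assembled by direct appeal to Theorems \ref{thm:sympmapthm} and \ref{thm:Lagrmap}, so the real content of the corollary is really just the packaging of these two results into the Poisson framework.
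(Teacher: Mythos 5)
Your first two paragraphs are exactly the paper's (unstated) proof: the corollary is presented as an immediate repackaging of Theorem \ref{thm:sympmapthm} applied to $(Z,\omega)$ and Theorem \ref{thm:Lagrmap} applied to $(q,h)$, and that is all the content there is. Your third paragraph, however, is both unnecessary in the paper's framework and not quite correct as written. The paper does not require $\Map(X,Z)$ itself to be a formal thickening of $\Map(X,Y)$: by the ``implicit lemma'' stated just after the definition of shifted Poisson structures, any Lagrangian morphism to a formal symplectic stack yields a Poisson structure by passing to the formal completion $\widehat{\Map(X,Z)}_{\Map(X,Y)}$, so there are no thickening axioms left to verify. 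Moreover, your argument that $\Map(X,Y)_{red}\to\Map(X,Z)_{red}$ is an isomorphism conflates $(\Map(X,Y))_{red}$ with $\Map(X,Y_{red})$; these differ in general (a map $X_T\to Y$ with $T$ reduced need not factor through $Y_{red}$ unless $X_T$ is reduced), so that step would not go through as stated. Since the completion convention makes the point moot, your proof is fine once the third paragraph is dropped.
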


We also have a variant of Theorem \ref{thm:Lagrmap} to the coisotropic case:
\begin{thm}
Let \(Y\) be \(n\)-shifted Posison over \(Z\), and let \(g:Y'\to Y\) be coisotropic over \(h:Z'\to Z\).  Let \(X\) be \(\calo\)-compact with \(d\)-orientation \([X]\).  Assume the stacks \(\Map(X,Y)\), \(\Map(X,Z)\), \(\Map(X,Y')\), and \(\Map(X,Z')\) are derived Artin stacks locally of finite presentation over \(k\).  Then \(\Map(X,Y')\to\Map(X,Y)\) is coisotropic over \(\Map(X,Z')\to\Map(X,Z)\).
\label{thm:coisomap}
\end{thm}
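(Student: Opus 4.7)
The plan is to transfer the entire coisotropic datum through the mapping-space construction $\int_{[X]}ev^{*}$, applying Theorem \ref{thm:Lagrmap} at each stage and using that $\Map(X,-)$ preserves fibered products. First I unpack the hypotheses: the Poisson structure on $Y$ over $Z$ is a Lagrangian $f:Y\to Z$ with $Z$ a formal derived stack carrying an $(n+1)$-shifted symplectic form $\omega_{Z}$ and $Y_{red}\to Z_{red}$ an isomorphism. The coisotropic structure on $g:Y'\to Y$ over $h:Z'\to Z$ supplies a formal $(n+1)$-shifted symplectic $Z'$, a Lagrangian structure on $h$, a lift $s:Y'\to Y\times_{Z}Z'$ with $Y'_{red}\to Z'_{red}$ an isomorphism, and a Lagrangian structure $\beta$ on $s$ relative to the $n$-shifted symplectic form on $Y\times_{Z}Z'$ from Theorem \ref{thm:Lagrint}.

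Next I apply Theorem \ref{thm:Lagrmap} in two steps. Applied to $h:Z'\to Z$, it yields a Lagrangian structure on $\Map(X,Z')\to\Map(X,Z)$ relative to the $(n+1-d)$-shifted symplectic form $\int_{[X]}ev^{*}(\omega_{Z})$ on $\Map(X,Z)$; by Corollary \ref{cor:Poismap} this is exactly the data exhibiting $\Map(X,Y)\to\Map(X,Z)$ as $(n-d)$-shifted Poisson. Applied to $s:Y'\to Y\times_{Z}Z'$, and using that $\Map(X,-)$ commutes with fibered products to identify
\[
\Map(X,Y\times_{Z}Z')\simeq \Map(X,Y)\times_{\Map(X,Z)}\Map(X,Z'),
\]
it yields a Lagrangian structure on $\Map(X,Y')\to\Map(X,Y)\times_{\Map(X,Z)}\Map(X,Z')$ for some $(n-d)$-shifted symplectic form on the target. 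The remaining reducedness condition follows from $Y'_{red}\to Z'_{red}$ being an isomorphism, upon replacing $\Map(X,Z')$ (if necessary) by the formal completion along $\Map(X,Y')$, using that $\Map(X,-)$ sends nilpotent thickenings to nilpotent thickenings.

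The one delicate point that requires verification is the compatibility of the two $n$-shifted symplectic forms on $\Map(X,Y)\times_{\Map(X,Z)}\Map(X,Z')$: the one produced internally by Lemma \ref{lem:Lagrcorr} from the two Lagrangians $\Map(X,Y)\to\Map(X,Z)$ and $\Map(X,Z')\to\Map(X,Z)$, and the one obtained as $\int_{[X]}ev^{*}$ of the intersection form on $Y\times_{Z}Z'$. I expect this to be the main obstacle; the argument is to chase the two presentations back through their definitions in terms of the underlying forms, closedness structures, and the formulas $g^{*}\omega_{12}=\pi_{1}^{*}h_{1}+\eta^{*}\omega-\pi_{2}^{*}h_{2}$ appearing in the proof of Lemma \ref{lem:Lagrcorr}, using that $\int_{[X]}ev^{*}$ is a map of simplicial sets respecting sums and pullbacks of closed forms. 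Once the two forms are identified (canonically, up to coherent homotopy), the Lagrangian structures transported by Theorem \ref{thm:Lagrmap} and the reducedness check assemble into the claimed coisotropic structure on $\Map(X,Y')\to\Map(X,Y)$ over $\Map(X,Z')\to\Map(X,Z)$.
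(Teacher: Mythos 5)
Your proposal follows the paper's own argument: the paper likewise applies Theorem \ref{thm:Lagrmap} to each piece of the coisotropic datum and concludes via the identification \(\Map(X,Z'\times_{Z}Y)\cong\Map(X,Z')\times_{\Map(X,Z)}\Map(X,Y)\) \emph{as symplectic spaces}. The compatibility of the two symplectic forms on this fiber product, which you correctly single out as the delicate point, is exactly the step the paper asserts without further justification, so your sketch is if anything more careful than the published proof.
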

\begin{proof}
By Theorem \ref{thm:Lagrmap}, the maps \(\Map(X,Z')\to\Map(X,Z)\), \(\Map(X,Y)\to\Map(X,Z)\), and \(\Map(X,Y')\to\Map(X,Z'\times_{Z}Y)\) have natural Lagrangian structures.  But
\[
\Map(X,Z'\times_{Z}Y)\cong\Map(X,Z')\times_{\Map(X,Z)}\Map(X,Y)
\]
as symplectic spaces.
\end{proof}
And similarly of Theorem \ref{thm:bdry}:
\begin{thm}
Let \(Y\) be \(n\)-shifted Poisson given by \(f:Y\to Z\) and Lagrangian structure \(h:0\sim\omega\).  Let \(g:W\to X\) be a map of \(\calo\)-compact derived Artin stacks, and let \([W]\) be a \(d\)-orientation on \(W\) and \(\gamma\) a boundary structure on \(g\).

Then \(\Map(X,Y)\to\Map(W,Y)\times_{\Map(W,Z)}\Map(X,Z)\) has a natural Lagrangian structure.  Equivalently, \(\Map(X,Y)\to \Map(W,Y)\) has a coisotropic structure over \(\Map(X,Z)\to\Map(W,Z)\).
\label{thm:coisobdry}
\end{thm}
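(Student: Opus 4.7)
The plan is to produce the desired Lagrangian structure on the natural map $\sigma\colon\Map(X,Y)\to P:=\Map(W,Y)\times_{\Map(W,Z)}\Map(X,Z)$ by assembling Theorem~\ref{thm:bdry} and Theorem~\ref{thm:Lagrmap} into a Lagrangian intersection and then supplying the additional Lagrangian lift that a coisotropic structure demands. Concretely, the $d$-orientation $[W]$ makes $\Map(W,Z)$ into an $(n+1-d)$-shifted symplectic stack (Theorem~\ref{thm:sympmapthm}). Theorem~\ref{thm:bdry} applied to $\gamma$ with target $Z$ equips $v_2\colon\Map(X,Z)\to\Map(W,Z)$ with a Lagrangian structure $\kappa_2$, while Theorem~\ref{thm:Lagrmap} applied to the Lagrangian $f\colon Y\to Z$ with source $W$ equips $v_1\colon\Map(W,Y)\to\Map(W,Z)$ with a Lagrangian structure $\kappa_1$. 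Theorem~\ref{thm:Lagrint} then gives $P$ the desired $(n-d)$-shifted symplectic form $\omega_P$, and the equivalence of the two formulations of the conclusion is immediate from the definition of coisotropic structure.

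For the isotropic structure on $\sigma$, write $u_1\colon\Map(X,Y)\to\Map(W,Y)$ for restriction along $g$ and $u_2\colon\Map(X,Y)\to\Map(X,Z)$ for post-composition with $f$, so that the square $v_1\circ u_1\simeq v_2\circ u_2$ commutes canonically. From the construction of $\omega_P$ in the proof of Theorem~\ref{thm:Lagrint} (following the pattern of Lemma~\ref{lem:Lagrcorr}), $\sigma^*\omega_P$ is the concatenation of $u_1^*\kappa_1$ and $-u_2^*\kappa_2$ around the common closed $2$-form pulled back from $\Map(W,Z)$. Since $\kappa_1$ arises from applying $\int_{[W]}ev^*$ to the Lagrangian datum $h\colon 0\sim f^*\omega$ on $Y$ and $\kappa_2$ arises from the boundary construction of Theorem~\ref{thm:bdry} applied to $\omega$ and $\gamma$, the homotopy $\gamma\colon 0\sim g_*[W]$ supplies a canonical null-homotopy of this concatenation in $\cala^{2,cl}(\Map(X,Y),n-d)$ by the same $2$-simplex of compatible integrations; this is the isotropic structure we need.

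The final step is nondegeneracy of the resulting $\Theta\colon\TT_\sigma\to\LL_{\Map(X,Y)}[n-d-1]$. Using the fiber sequence $\TT_\sigma\to\TT_{\Map(X,Y)}\to\sigma^*\TT_P$, the pullback presentation of $\TT_P$, and the mapping-stack formulas $\TT_{\Map(A,B),\phi}\simeq\RR\Gamma(A,\phi^*\TT_B)$, this reduces to a five-lemma argument: the Lagrangian structures $\kappa_1$ and $\kappa_2$ provide quasi-isomorphisms on the $\Map(W,Y)$ and $\Map(X,Z)$ strands, and nondegeneracy of the symplectic form on $\Map(W,Z)$ provides the third. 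The main obstacle is checking that these three separate duality isomorphisms patch coherently under $\gamma$ into the single pairing on $\RR\Gamma(X,\phi^*\TT_Y)$ governed by integration on $X$ relative to $W$; this is the relative analogue of the nondegeneracy statement in Theorem~\ref{thm:bdry}, and I would model it on Calaque's argument in~\cite{Calaque}, substituting the Lagrangian pair $(f\colon Y\to Z, h)$ for a symplectic target throughout.
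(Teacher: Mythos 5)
Your proposal follows essentially the same route as the paper: the symplectic form on the fiber product is assembled from Theorem \ref{thm:sympmapthm}, Theorem \ref{thm:Lagrmap}, Theorem \ref{thm:bdry} and Theorem \ref{thm:Lagrint}; the isotropic structure is the double integration of $h$ against $\gamma$ (the paper writes it as $\int_\gamma ev_X^*h$); and nondegeneracy is reduced, via the fiber sequence for $\TT_\sigma$ and the mapping-stack tangent formulas, to the quasi-isomorphism from the Lagrangian structure on $f$ composed with the one from the nondegenerate boundary structure. The only cosmetic difference is that the paper identifies $\sigma^*\TT_\sigma$ explicitly as $(\pi_2)_*$ of a homotopy fiber and writes the comparison map directly as a composite of the two quasi-isomorphisms rather than phrasing it as a five-lemma patching, so the coherence issue you flag is absorbed into that explicit identification.
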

\begin{proof}
\(\Map(W,Z)\) has symplectic structure \(\int_{[W]}ev_W^*\omega\).  The Lagrangian structure on \(\Map(W,Y)\to\Map(W,Z)\) is given by
\[
\int_{[W]}ev_W^*h:0\sim\int_{[W]}ev_W^*g^*\omega=(g\circ-)^*\int_{[W]}ev_W^*\omega.
\]
The Lagrangian structure on \(\Map(X,Z)\to\Map(W,Z)\) is given by
\[
\int_{\gamma}ev_X^*\omega:0\sim\int_{f_*[W]}ev_X^*\omega=(-\circ f)^*\int_{[W]}ev_W^*\omega.
\]
Let \(\tilde{\omega}\) be the induced symplectic structure on \(\Map(W,Y)\times_{\Map(W,Z)}\Map(X,Z)\), and let
\[
r:\Map(X,Y)\to \Map(W,Y)\times_{\Map(W,Z)}\Map(X,Z)
\]
be the natural map.  Then
\[
r^*\tilde{\omega}=\int_{f_*[W]}ev_X^*h-\int_{\gamma}ev_W^*g^*\omega,
\]
and the isotropy is given by
\[
\int_\gamma ev_X^*h:0\sim r^*\tilde{\omega}.
\]
For the Lagrangian condition, fix a dga \(A\) and \(\sigma:\Spec A\to\Map(X,Y)\) corresponding to \(\tilde{\sigma}:X\times\Spec A\to Y\).  Let \(\pi_2:X\times\Spec A\to\Spec A\) be the projection.  Then
\[
\sigma^*\TT_r\simeq(\pi_2)_*\HoFib(\tilde{\sigma}^*\TT_g\to(f\times1_{\Spec A})_*(f\times1_{\Spec A})^*\tilde{\sigma}^*\TT_g)
\]
and
\[
\sigma^*\LL_{\Map(X,Y)}\simeq((\pi_2)_*\tilde{\sigma}^*\TT_Y)^\vee=\Hom\left((\pi_2)_*\tilde{\sigma}^*\TT_Y,\calo_{\Spec A}\right).
\]
The map \(\sigma^*\TT_r\to\sigma^*\LL_{\Map(X,Y)}[n-d-1]\) is induced by the maps \(\TT_g\to\LL_Y[n]\), a quasi-isomorphism given by the Lagrangian structure \(h\), and
\[
(\pi_2)_*\HoFib(\tilde{\sigma}^*\LL_Y\to(f\times1_{\Spec A})_*(f\times1_{\Spec A})^*\tilde{\sigma}^*\LL_Y)\to((\pi_2)_*\tilde{\sigma}^*\TT_Y)^\vee[d-1],
\]
a quasi-isomorphism given by the nondegenerate boundary structure.
\end{proof}

Specifically, we want to generalize the case of \ref{cor:acanLag}:
\begin{cor}
Let \(X\) be a geometrically connected smooth proper algebraic variety of dimension \(d+1\), and let \(D\) be an anticanonical effective divisor.  Let \(Y\) have an \(n\)-shifted Poisson structure given by \(Y\to Z\).  Assume \(\Map(X,Y)\), \(\Map(D,Y)\),\(\Map(X,Z)\), and \(\Map(D,Z)\) are derived Artin stacks.

Then there exist a natural \((n-d)\)-shifted Poisson structure on \(\Map(D,Y)\) (over \(\Map(D,Z)\)) and coisotropic structure on \(\Map(X,Y)\to\Map(D,Y)\) (over \(\Map(X,Z)\to\Map(D,Z)\)).
\label{cor:acancois}
\end{cor}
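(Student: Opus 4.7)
The plan is to mimic the proof of Corollary \ref{cor:acanLag}, substituting the Poisson-and-coisotropic analogues developed earlier in this section for their symplectic-and-Lagrangian counterparts. Since $D$ is an anticanonical effective divisor in $X$ and $X$ is smooth of dimension $d+1$, the adjunction formula gives $\omega_D\simeq\calo_D$, so $D$ is a $d$-dimensional Calabi--Yau variety. As in the discussion after Theorem \ref{thm:sympmapthm}, this trivialization together with Serre duality produces a canonical $d$-orientation $[D]:\RR\Gamma(D,\calo_D)\to k[-d]$. Both $X$ and $D$ are $\calo$-compact since they are smooth and proper over $k$.

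With $[D]$ in hand, the first half of the corollary is immediate: Corollary \ref{cor:Poismap}, applied to the $n$-shifted Poisson structure $Y\to Z$ with $\calo$-compact source $(D,[D])$, yields an $(n-d)$-shifted Poisson structure on $\Map(D,Y)$ over $\Map(D,Z)$. For the coisotropic structure, the Lemma preceding Corollary \ref{cor:acanLag} (Claim 3.3 of \cite{Calaque}) supplies a canonical nondegenerate boundary structure $\gamma$ on the inclusion $i:D\to X$ with respect to $[D]$. We then specialize Theorem \ref{thm:coisobdry} to the data $(W,X,g,[W],\gamma)=(D,X,i,[D],\gamma)$, with Poisson target $Y\to Z$: the second (equivalent) formulation of that theorem's conclusion asserts exactly that $\Map(X,Y)\to\Map(D,Y)$ carries a coisotropic structure over $\Map(X,Z)\to\Map(D,Z)$, which is precisely what is claimed.

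There is no real obstacle; the argument is a direct specialization of the two general results already proved. The one point worth noting is that the Poisson structure on $\Map(D,Y)$ produced by Corollary \ref{cor:Poismap} must be the same one that appears implicitly in the output of Theorem \ref{thm:coisobdry}, so that the two halves of the corollary fit together. This is automatic: both are obtained by transgressing the $(n+1)$-shifted symplectic form on the chosen formal Poisson thickening of $Y$ along the same orientation $[D]$, so their agreement is built into the constructions and requires no extra verification.
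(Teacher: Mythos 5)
Your proposal is correct and follows exactly the route the paper intends: the corollary is stated without an explicit proof precisely because it is the direct specialization of Corollary \ref{cor:Poismap} (for the Poisson structure on \(\Map(D,Y)\)) and Theorem \ref{thm:coisobdry} with the canonical nondegenerate boundary structure on \(i:D\to X\) from \cite{Calaque}, Claim 3.3 (for the coisotropic structure on \(\Map(X,Y)\to\Map(D,Y)\)). Your closing remark that the two transgressed structures agree by construction is a sensible bit of bookkeeping that the paper leaves implicit.
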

Finally, we need more technical ``Poisson generalizations'' of some results.  The following is a generalization of the first statement of Theorem \ref{thm:coisobdry}:
\begin{cor}
Let \(Y\) be \(n\)-shifted Poisson given by \(f:Y\to Z\).  Let \(C\to Y\) be coisotropic over \(Y'\to Z\).  Let \(g:W\to X\) be a map of \(\calo\)-compact derived Artin stacks, and let \([W]\) be a \(d\)-orientation on \(W\) and \(\gamma\) a boundary structure on \(g\).

Then \(\Map(X,C)\to\Map(W,C)\times_{\Map(W,Y)}\Map(X,Y)\) has a natural coisotropic structure.
\label{cor:coisobdrytwo}
\end{cor}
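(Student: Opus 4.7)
The plan is to exhibit the required coisotropic structure on $\Map(X,C)\to A$, where $A := \Map(W,C)\times_{\Map(W,Y)}\Map(X,Y)$, with Lagrangian witness $\Map(X,Y')\to B$ and $B := \Map(W,Y')\times_{\Map(W,Z)}\Map(X,Z)$. The strategy is to assemble two applications of Theorem \ref{thm:coisobdry}, one of Theorem \ref{thm:coisomap}, and one of Corollary \ref{cor:coisoint}, and then identify the resulting symplectic intersection with a fiber product of mapping stacks into $V := Y\times_Z Y'$.

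First I would package the coisotropic data of $C\to Y$ over $Y'\to Z$ into a single Lagrangian. Since $Y\to Z$ is Lagrangian (the Poisson structure on $Y$) and $Y'\to Z$ is Lagrangian (part of the coisotropic data), Theorem \ref{thm:Lagrint} makes $V$ an $n$-shifted symplectic stack, and the coisotropic data for $C\to Y$ provides a Lagrangian $C\to V$. Theorem \ref{thm:coisobdry} applied to $C\to V$ with the boundary datum $(g,[W],\gamma)$ then produces a Lagrangian structure on
\[
\Map(X,C)\to \Map(W,C)\times_{\Map(W,V)}\Map(X,V).
\]
Using $\Map(W,V)=\Map(W,Y)\times_{\Map(W,Z)}\Map(W,Y')$ and $\Map(X,V)=\Map(X,Y)\times_{\Map(X,Z)}\Map(X,Y')$, I would rearrange this iterated fiber product into the form $A\times_B\Map(X,Y')$.

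Independently, I would establish that $A$ is $(n-d-1)$-shifted Poisson over $B$ via Corollary \ref{cor:coisoint}: Theorem \ref{thm:coisomap} applied to $C\to Y$ over $Y'\to Z$ gives that $\Map(W,C)\to\Map(W,Y)$ is coisotropic over $\Map(W,Y')\to\Map(W,Z)$; Theorem \ref{thm:coisobdry} applied to $Y\to Z$ gives that $\Map(X,Y)\to\Map(W,Y)$ is coisotropic over $\Map(X,Z)\to\Map(W,Z)$; fibering these over $\Map(W,Y)$ produces the desired Poisson structure on $A$ over $B$. Likewise, applying Theorem \ref{thm:coisobdry} to the Lagrangian $Y'\to Z$ yields a Lagrangian structure on $\Map(X,Y')\to B$. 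Unpacking the definition of coisotropic structure, the Lagrangian on $\Map(X,C)\to A\times_B\Map(X,Y')$ is exactly the datum needed to exhibit $\Map(X,C)\to A$ as coisotropic over $\Map(X,Y')\to B$.

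The main obstacle will be verifying that the two candidate $(n-d-1)$-shifted symplectic structures on $A\times_B\Map(X,Y')\simeq \Map(W,C)\times_{\Map(W,V)}\Map(X,V)$ actually agree — one views it as a Lagrangian intersection inside the $(n-d)$-symplectic $\Map(W,V)$, the other as a Lagrangian intersection built from $A\to B$ and $\Map(X,Y')\to B$ sitting over the $(n-d)$-symplectic $B$. Both are assembled from the same underlying Lagrangian data (on $Y\to Z$, $Y'\to Z$, on $C\to V$, and on the boundary of $W\to X$) but with the pairings carried out in a different order, so the agreement should follow by a bookkeeping argument analogous to the proof of Lemma \ref{lem:Lagrcorr}. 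A secondary technicality is replacing $V$ and $\Map(X,Y')$ by the appropriate formal neighborhoods so that the reduced-isomorphism clauses in the definitions of Poisson and coisotropic structures apply strictly.
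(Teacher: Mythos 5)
Your proposal matches the paper's proof essentially step for step: the paper likewise takes $\Xi=\Map(X,Z)\times_{\Map(W,Z)}\Map(W,Y')$ as the Poisson base for $\Psi=\Map(W,C)\times_{\Map(W,Y)}\Map(X,Y)$, uses $\Map(X,Y')\to\Xi$ (Lagrangian by Theorem \ref{thm:coisobdry} applied to $Y'\to Z$) as the witness, rewrites the relevant fiber product as $\Map(X,Y\times_ZY')\times_{\Map(W,Y\times_ZY')}\Map(W,C)$, and obtains the final Lagrangian structure from Theorem \ref{thm:coisobdry} applied to $C\to Y\times_ZY'$. The only notable difference is that you explicitly flag the agreement of the two induced symplectic structures on this fiber product as the main point requiring verification, whereas the paper simply asserts they agree up to sign.
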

\begin{proof}
Recall that the Poisson structure on \(\Psi:=\Map(W,C)\times_{\Map(W,Y)}\Map(X,Y)\) is given by
\[
\Psi\to\Xi:=\Map(X,Z)\times_{\Map(W,Z)}\Map(W,Y'),
\]
with a natural Lagrangian structure, as per Corollary \ref{cor:coisoint}.  I claim that\(\Map(X,C)\to\Psi\) is coisotropic over \(\Map(X,Y')\to\Psi\).  First, the Lagrangian structure on \(\Map(X,Y')\to\Psi\) is exactly the one given by Theorem \ref{thm:coisobdry}.  Next, note that \(\Map(X,Y')\times_{\Psi}\Xi\) is exactly the limit of the diagram
\[
\xymatrix{
&&\Map(W,C)\ar[dr]\ar'[d][dd]\\
&\Map(X,Y)\ar[rr]\ar[dd]&&\Map(W,Y)\ar[dd]\\
\Map(X,Y')\ar'[r][rr]\ar[dr]&&\Map(W,Y')\ar[dr]\\
&\Map(X,Z)\ar[rr]&&\Map(W,Z)
},
\]
which we can rewrite as \(\Map(X,Y\times_ZY')\times_{\Map(W,Y\times_ZY')}\Map(W,C)\).  This space has a symplectic form arising from the form on \(Y\times_ZY'\), which will agree with the structure on \(\Map(X,Y')\times_{\Psi}\Xi\) up to sign.  But the fact that
\[
\Map(X,C)\to\Map(X,Y\times_ZY')\times_{\Map(W,Y\times_ZY')}\Map(W,C)
\]
has a Lagrangian structure is precisely Theorem \ref{thm:coisobdry}.
\end{proof}
This is a generalization of Lemma \ref{lem:Lagrcorr}:
\begin{cor}
Let \(X_1, X_2, X_3, Y\) be derived Artin stacks, and let \(Y\) have an \(n\)-shifted Poisson structure given by \(Y\to Z\) for some \((n+1)\)-shifted symplectic \(Z\).  For \(i=1,2,3\), let \(f_i:X_i\to Y\) be a morphism coisotropic over \(Y_i'\to Z\).  Note that any product \(X_i\times_Y X_j\) has a canonical \((n-1)\)-shifted Poisson structure over \(Y_i'\times_Y Y_j'\).  Let \(g_{12}:C_{12}\to X_1\times_Y X_2\) and \(g_{23}:C_{23}\to X_2\times_Y X_3\) be morphisms coisotropic over \(L_{12}\to Y_1'\times_Y Y_2'\) and \(L_{23}\to Y_2'\times_Y Y_3'\), respectively.

Then \(C_{13}:=C_{12}\times_{X_2}C_{23}\to X_1\times_{Y}X_3\) has a canonical coisotropic structure over \(L_{12}\times_{Y'_2}L_{23}\to Y_1'\times_ZY_3'\).
\label{cor:coisocorr}
\end{cor}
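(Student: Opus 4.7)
The plan is to reduce the statement to two applications of Lemma \ref{lem:Lagrcorr}: first, in the $(n+1)$-shifted symplectic stack $Z$, to assemble the target Lagrangian $L_{13}$ together with some auxiliary Lagrangians in the $n$-shifted symplectic stack $T_2 := Y \times_Z Y'_2$; and second, in $T_2$ itself, to produce the Lagrangian structure on $C_{13}$. The clever choice of auxiliary Lagrangians is the main substantive input, and everything else is formal.

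First, in $Z$: the maps $Y, Y'_1, Y'_2, Y'_3 \to Z$ are Lagrangian, and $X_1 \to Y \times_Z Y'_1$, $L_{12} \to Y'_1 \times_Z Y'_2$, $L_{23} \to Y'_2 \times_Z Y'_3$, $X_3 \to Y \times_Z Y'_3$ are Lagrangian correspondences between them. Three uses of Lemma \ref{lem:Lagrcorr} produce Lagrangian structures on $L_{13} := L_{12} \times_{Y'_2} L_{23} \to Y'_1 \times_Z Y'_3$, on $A_1 := X_1 \times_{Y'_1} L_{12} \to T_2$, and on $A_3 := L_{23} \times_{Y'_3} X_3 \to T_2$. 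The first is the Poisson ambient for $X_1 \times_Y X_3$; the latter two, together with the Lagrangian $X_2 \to T_2$ that is part of the coisotropic data on $X_2 \to Y$, furnish three Lagrangians $A_1, X_2, A_3$ in the $n$-shifted symplectic stack $T_2$.

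Second, setting $\Sigma_{ij} := (X_i \times_Y X_j) \times_{Y'_i \times_Z Y'_j} L_{ij}$ for the $(n-1)$-shifted symplectic ambient of the Lagrangian $C_{ij}$, one verifies by chasing universal properties of the relevant fibre products that
\[
\Sigma_{12} \simeq A_1 \times_{T_2} X_2, \qquad \Sigma_{23} \simeq X_2 \times_{T_2} A_3, \qquad \Sigma_{13} \simeq A_1 \times_{T_2} A_3.
\]
Under these identifications, $C_{12}$ and $C_{23}$ are Lagrangian correspondences in $T_2$ between the pairs $(A_1, X_2)$ and $(X_2, A_3)$, composable through $X_2$. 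A second application of Lemma \ref{lem:Lagrcorr}, this time in $T_2$, then yields the Lagrangian structure on $C_{13} = C_{12} \times_{X_2} C_{23} \to A_1 \times_{T_2} A_3 \simeq \Sigma_{13}$. Combined with the Lagrangian $L_{13}$ from Step 1, this is precisely the data of a coisotropic structure on $g_{13}$ over $L_{13} \to Y'_1 \times_Z Y'_3$.

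The hard part is the bookkeeping in the first step: identifying the correct spans of Lagrangians in $Z$ so that the three fibre-product identifications in the display above hold on the nose, and checking that the Poisson structure induced on $X_1 \times_Y X_3$ by the resulting $L_{13}$ coincides with the one furnished by Corollary \ref{cor:coisoint} from the coisotropic data on $X_1$ and $X_3$. Once these identifications are fixed, both the isotropic homotopies and the nondegeneracy of the Lagrangian structure on $C_{13}$ are delivered automatically by the two invocations of Lemma \ref{lem:Lagrcorr}, with no further diagram chase required.
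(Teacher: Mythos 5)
Your proposal follows essentially the same route as the paper: both rewrite the ambient $\Sigma_{13}=(X_1\times_YX_3)\times_{Y_1'\times_ZY_3'}(L_{12}\times_{Y_2'}L_{23})$ as $(X_1\times_{Y_1'}L_{12})\times_{Y\times_ZY_2'}(X_3\times_{Y_3'}L_{23})$, obtain the Lagrangian structures on the two outer factors over $Y\times_ZY_2'$ from Lemma \ref{lem:Lagrcorr}, reinterpret $C_{12}$ and $C_{23}$ as Lagrangian correspondences through the Lagrangian $X_2\to Y\times_ZY_2'$, and apply Lemma \ref{lem:Lagrcorr} once more to conclude. Your write-up is correct and, if anything, slightly more explicit than the paper about the Lagrangian structure on $L_{13}$ and the compatibility of the induced Poisson structure on $X_1\times_YX_3$.
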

\begin{proof}
We need to show that
\[
C_{13}\to T:=(X_1\times_YX_3)\times_{Y_1'\times_ZY_3'}(L_{12}\times_{Y_2'}L_{23})
\]
has a Lagrangian structure.  As in the previous proof, writing \(T\) as a limit gives us
\[
T\cong (X_1\times_{Y_1'}L_{12})\times_{Y\times_ZY_2'}(X_3\times_{Y_3'}L_{23}).
\]
The base is again symplectic, and the maps
\[
X_1\times_{Y_1'}L_{12}\to Y\times_ZY_2'\leftarrow X_3\times_{Y_3'}L_{23}
\]
have Lagrangian structures provided by Lemma \ref{lem:Lagrcorr}.  Thus this expresses \(T\) as a Lagrangian intersection, which again has a symplectic structure that agrees with the original structure on \(T\) up to sign.  Now, \(X_2\to Y\times_ZY_2'\) has a Lagrangian structure by assumption, and further rearrangement gives a Lagrangian structure on
\[
C_{12}\to (X_1\times_Y X_2)\times_{Y_1'\times_ZY_2'} L_{12}\cong (X_1\times_{Y_1'}L_{12})\times_{Y\times_ZY_2'}X_2,
\]
and similarly for \(C_{23}\).

But then we can apply Lemma \ref{lem:Lagrcorr} to get the Lagrangian structure on \(C_{13}\to T\).
\[
\xymatrix{&&C_{13}\ar[ld]\ar[rd]\ddtwocell\omit\\
&C{12}\ar[ld]\ar[rd]\ddtwocell\omit&&C_{23}\ar[ld]\ar[rd]\ddtwocell\omit\\
X_1\times_{Y_1'}L_{12}\ar[rrd]&&X_2\ar[d]&&X_3\times_{Y_3'}L_{23}\ar[lld]\\
&&Y\times_ZY_2'&
}
\]
\end{proof}

\section{Framed Mapping Spaces}
\begin{defn}
Let \(D, X,\) and \(Y\) be derived Artin stacks, and fix maps \(i:D\to X\) and \({f:D\to Y}\).  We define the \emph{framed mapping space} \(\Map(X,D,Y,f)=\HoFib_f(\Map(X,Y)\to\Map(D,Y))\), the homotopy fiber of \(\Map(X,Y)\) over \(f\in\Map(D,Y)\).  Where \(f\) is understood we will write \(\Map(X,D,Y)\).
\end{defn}

In the following, \(X\) will generally be a smooth scheme and \(i:D\to X\) the inclusion of a divisor; or \(X\) and \(D\) will both be divisors in some smooth scheme.

Fix a cdga \(A\) and consider an \(A\)-point \(\underline{g}:\RSpec A\to \Map(X,D,Y)\) corresponding to \(g:X\times\RSpec A:=X_A\to T\) framed along \(D_A:=D\times\RSpec A\).
What is \(\underline{g}^*\TT_{Map(X,D,Y)}=\TT_{Map(X,D,Y),g}\)?.  We have an exact sequence
\[
\xymatrix{
\TT_{\Map(X,D,Y),g}\ar[r]&\TT_{\Map(X,Y),g}\ar[r]\ar[d]^[right]{\sim}&\TT_{\Map(D,Y),i\circ g}\ar[d]^[right]{\sim}\\
&\RR\Gamma(X_A,g^*\TT_Y)&\RR\Gamma(D_A,i^*g^*\TT_Y)
},
\]
so we can identify \(\TT_{\Map(X,D,Y),g}\simeq\RR\Gamma(X_A,(g^*\TT_Y)_{-D_A})\), where \((g^*\TT_Y)_{-D_A}\) is the subsheaf of \(g^*\TT_Y\) vanishing on \(D_A\).  In our cases we will be able to write \(D=V(a)\) locally, so \((g^*\TT_Y)_{-D_A}\simeq a(g^*\TT_Y)\).\footnote{Here and elsewhere I use \(i:D_A\to X_A\) instead of \(i\times id_{\RSpec A}\) or perhaps \(i_A\); similar simplifications exist later.}

More globally, let \(ev:X\times\Map(X,D,Y)\to Y\) be the evaluation map and
\[
\pi:X\times\Map(X,D,Y)\to \Map(X,D,Y)
\]
the projection.  Then \(\TT_{\Map(X,D,Y)}\simeq\pi_*\left((ev^*\TT_Y)_{-(D\times\Map(X,D,Y))}\right)\).

For \(p\geq 0\) we have a cup product map
\[
\wedge^p\TT_{\Map(X,Y)}\sim\wedge^p(\pi_*ev^*\TT_Y)\to\pi_*\wedge^p(ev^*\TT_Y).
\]
This induces a map
\[
\pi_*(\wedge^pev^*\LL_Y)\to (\pi_*(\wedge^pev^*\TT_Y))^\vee\to \wedge^p\LL_{\Map(X,Y)}.
\]
This map is compatible with the mixed structure on both sides, so descends to the level of negative cyclic complexes:
\[
\pi_*ev^*(NC(Y))\to NC(\Map(X,Y)).
\]

With this in mind, we define a special class of forms on \(\Map(X,Y)\):
\begin{defn}
A \(p\)-form on \(\Map(X,Y)\) (resp. closed \(p\)-form) is \emph{multiplicative} if the corresponding map \(\calo_{\Map(X,Y)}\to\wedge^p\LL_{\Map(X,Y)}\) factors through \(\pi_*(\wedge^pev^*\LL_Y)\) (resp. factors through \(\pi_*ev^*(NC(Y))\)).
\end{defn}
Note that all forms obtained from the \(\int_{[X]}ev^*(-)\) map of Theorem \ref{thm:sympmapthm} are multiplicative.

The importance of multiplicative forms is as follows.  Suppose \(\cale_1,\cale_2\) are two sheaves on \(\Map(X,Y)\) with maps \(\cale\to ev^*\TT_Y\), which are orthogonal in the sense that the multiplication map \(\cale_1\otimes\cale_2\to\wedge^2ev^*\TT_Y\) is \(0\).  Then for any \(2\)-form \(\omega\), we have a pullback
\[
\kappa:\wedge^2\LL_{\Map(X,Y)}\to(\pi_*\cale_1)^\vee\otimes(\pi_*\cale_2)^\vee.
\]
If \(\omega\) is multiplicative, then we can lift the pullback through \(\pi_*(\wedge^2ev^*\LL_Y)\to\pi_*(\cale_1^\vee\otimes\cale_2^\vee)\).  This last map is the \(0\) map, so \(\kappa(\omega)\) will be \(0\).

We want to generalize the case of Theorem \ref{thm:sympmapthm} and Corollary \ref{cor:Poismap} to spaces \(\Map(X,D,Y)\).  The main theorem of this section is
\begin{thm}
Let \(X\) be a \(d\)-dimensional proper smooth scheme and \(D\) an effective divisor.  Suppose \(E\) is an effective divisor of \(X\) such that \(\tilde{D}=2D+E\) is an anticanonical divisor.  Let \(Y\) be a derived Artin stack such that \(\Map(X,Y)\), \(\Map(\tilde{D},Y)\), \(\Map(D,Y)\), and \(\Map(D+E,Y)\) are themselves derived Artin stacks of locally finite presentation over \(k\).  Fix a base map \(f:D\to Y\).
\begin{enumerate}
\item Suppose \(Y\) is \(n\)-shifted symplectic and the projection \(\Map(D+E,Y)\to\Map(D,Y)\) is etale over \(f\).  Then \(\Map(X,D,Y)\) has an \((n-d)\)-shifted symplectic structure.
\item Suppose \(Y\) is \(n\)-shifted Poisson.  Then \(\Map(X,D,Y)\) has an \((n-d)\)-shifted Poisson structure.
\end{enumerate}
\label{thm:frmap}
\end{thm}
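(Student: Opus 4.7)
The plan is to realize $\Map(X,D,Y)$ as a Lagrangian (resp.\ coisotropic) intersection inside the shifted symplectic (resp.\ Poisson) stack $\Map(\tilde{D},Y)$, which is available because $\tilde{D}$ is anticanonical.

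First, I would apply Corollary \ref{cor:acanLag} to the anticanonical divisor $\tilde{D}=2D+E$: this gives $\Map(\tilde{D},Y)$ an $(n-d+1)$-shifted symplectic structure and makes the restriction $\rho:\Map(X,Y)\to\Map(\tilde{D},Y)$ Lagrangian. Since the restriction $\Map(X,Y)\to\Map(D,Y)$ factors through $\rho$, I can identify
\[
\Map(X,D,Y)\simeq\Map(X,Y)\times_{\Map(\tilde{D},Y)}\mathcal{L},\qquad \mathcal{L}:=\HoFib_f\bigl(\Map(\tilde{D},Y)\to\Map(D,Y)\bigr),
\]
and the task reduces to producing a Lagrangian structure on $\iota:\mathcal{L}\to\Map(\tilde{D},Y)$; Theorem \ref{thm:Lagrint} will then yield the desired $(n-d)$-shifted symplectic structure.

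For the isotropic structure, a tangent-complex computation at $\tilde{f}\in\mathcal{L}$ gives $\TT_{\mathcal{L},\tilde{f}}\simeq\RR\Gamma(\tilde{D},\tilde{f}^*\TT_Y\otimes I_{D,\tilde{D}})$. The pairing of two such tangents under the form on $\Map(\tilde{D},Y)$ factors through $I_{D,\tilde{D}}^2=I_{2D,\tilde{D}}$, which is supported on $E$; this is precisely where the multiplicity-$2$ factor of $D$ in $\tilde{D}$ does its work. Using multiplicativity of $\int_{[\tilde{D}]}ev^*\omega$ (in the sense defined just before the theorem) together with the Mayer--Vietoris-like decomposition $\tilde{D}\simeq 2D\cup_D(D+E)$, I would produce a coherent null-homotopy of $\iota^*\omega$. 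Nondegeneracy then follows from a Serre duality computation on $X$: the map $\TT_{\Map(X,D,Y)}\to\LL_{\Map(X,D,Y)}[n-d]$ is a quasi-isomorphism up to the obstruction $\RR\Gamma(E,g|_E^*\TT_Y\otimes\calo_X(-D))$, and this is precisely the relative tangent complex of $\Map(D+E,Y)\to\Map(D,Y)$ at $f$, which vanishes by the \'etale hypothesis.

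For part (2), I would run the same construction in the coisotropic setting, replacing Corollary \ref{cor:acanLag} by Corollary \ref{cor:acancois} and Theorem \ref{thm:Lagrint} by Corollary \ref{cor:coisoint} applied to the target witnessing the Poisson structure $Y\to Z$. The key simplification is that we no longer need nondegeneracy, so the \'etale hypothesis becomes unnecessary. The main obstacle throughout is the isotropic/coisotropic step: producing an explicit null-homotopy of $\iota^*\omega$ at the level of the negative cyclic complex (as opposed to only proving its vanishing in cohomology) requires a careful interplay between multiplicativity of the symplectic form, the $2D$ part of $\tilde{D}$, and the description of the orientation $[\tilde{D}]$ coming from the adjunction formula.
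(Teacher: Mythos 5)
Your global architecture is exactly the paper's: write $\Map(X,D,Y)\simeq\Map(X,Y)\times_{\Map(\tilde{D},Y)}\mathcal{L}$ with $\mathcal{L}=\HoFib_f(\Map(\tilde{D},Y)\to\Map(D,Y))$, make $\Map(X,Y)\to\Map(\tilde{D},Y)$ Lagrangian via Corollary \ref{cor:acanLag}, and conclude with Theorem \ref{thm:Lagrint}. The gap is in the step you yourself flag as the main obstacle: the isotropic structure on $\iota:\mathcal{L}\to\Map(\tilde{D},Y)$. You pair $\TT_{\mathcal{L}}$ with itself and observe the pairing factors through $I_{D,\tilde{D}}^2=I_{2D,\tilde{D}}$, which is merely \emph{supported on} $E$, not zero; you then need to manufacture a coherent null-homotopy at the level of the negative cyclic complex, and your proposed ``Mayer--Vietoris decomposition $\tilde{D}\simeq 2D\cup_D(D+E)$'' does not supply one. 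The paper avoids constructing any homotopy at all: the \'etale hypothesis on $\Map(D+E,Y)\to\Map(D,Y)$ gives a quasi-isomorphism $\RR\Gamma(\tilde{D}_A,(g^*\TT_Y)_{-(D+E)_A})\simeq\RR\Gamma(\tilde{D}_A,(g^*\TT_Y)_{-D_A})$, so one may replace one of the two tangent factors by the ideal of $D+E$; then the product lands in $I_{D}\cdot I_{D+E}\subseteq I_{2D+E}=0$ on $\tilde{D}$, the pairing is identically zero by multiplicativity, and $0$ is the isotropic structure. So the \'etale hypothesis is what makes the isotropy work on the nose --- you have it doing duty only for nondegeneracy, which is why your isotropic step is stuck. (Nondegeneracy is then the statement that $\RR\Gamma(\tilde{D},(g^*\TT_Y)_{-D})\simeq\RR\Gamma(D+E,j^*g^*\TT_Y\otimes K_{(D+E)/A})$ pairs perfectly with $\RR\Gamma(D+E,j^*g^*\TT_Y)$ by Serre--Grothendieck duality on $D+E$, again combined with the \'etale identification.)

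The Poisson case cannot be dispatched by ``running the same construction coisotropically.'' A coisotropic structure on $\mathcal{L}\to\Map(\tilde{D},Y)$ requires exhibiting an auxiliary formal stack with a Lagrangian map to $\Map(\tilde{D},Z)$ and a Lagrangian structure on the map from $\mathcal{L}$ into the resulting fiber product; the paper takes this witness to be the formal completion $B$ of $\mathcal{L}$ inside $\Map(D+E,D,Y)\times_{\Map(D+E,D,Z)}\Map(\tilde{D},D,Z)$, and verifying the two Lagrangian conditions for $B$ is the bulk of the work. This choice is also what explains why the \'etale hypothesis disappears in part (2): the formal thickening $B$ absorbs the discrepancy between framing along $D$ and along $D+E$, rather than (as you suggest) nondegeneracy simply becoming irrelevant --- nondegeneracy conditions are still checked, just against a different ambient stack. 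Without identifying this witness, part (2) of your proposal is not yet a proof.
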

\begin{proof}
Consider the fiber diagram
\[
\xymatrix{
\Map(X,D,Y)\ar[r]\ar[d]&\Map(\tilde{D},D,Y)\ar[r]\ar[d]&\bullet\ar[d]\\
\Map(X,Y)\ar[r]&\Map(\tilde{D},Y)\ar[r]&\Map(D,Y)
},
\]
where both squares and the larger rectangle are Cartesian.  Then \(\Map(\tilde{D},Y)\) is \((n-d+1)\)-shifted symplectic (resp. Poisson) by Theorem \ref{thm:sympmapthm} (Corollary \ref{cor:Poismap}), and \(\Map(X,Y)\to\Map(\tilde{D},Y)\) has a canonical Lagrangian structure (coisotropic structure) by Corollary \ref{cor:acanLag} (Corollary \ref{cor:acancois}).  If we can show that \(\Map(\tilde{D},D,Y)\to\Map(\tilde{D},Y)\) has a Lagrangian structure (coisotropic structure) as well, we will be done by Theorem \ref{thm:Lagrint} (Corollary \ref{cor:coisoint}).  We state this as a separate lemma:
\begin{lem}
Let \(X,Y,D,\tilde{D}\) be as in the theorem.  Then
\begin{enumerate}
\item Suppose \(Y\) is \(n\)-shifted symplectic and the projection \(\Map(D+E,Y)\to\Map(D,Y)\) is etale over \(f\).  Then \(\varphi:\Map(\tilde{D},D,Y)\to\Map(\tilde{D},Y)\) has an canonical Lagrangian structure.
\item Suppose \(Y\) is \(n\)-shifted Poisson.  Then \(\Map(\tilde{D},D,Y)\to\Map(\tilde{D},Y)\) has a canonical coisotropic structure.
\end{enumerate}
\label{lem:Lagfib}
\end{lem}
\begin{proof}
Let \(i:D\to\tilde{D}\) be the inclusion, and let \(A\) be a cdga.  Let \(\underline{g}:\RSpec A\to\Map(\tilde{D},D,Y)\) correspond to \(g:\tilde{D}_A\to Y\).  Then
\[
\TT_{\Map(\tilde{D},D,Y),g}\simeq\RR\Gamma(\tilde{D}_A,\HoFib(g^*\TT_Y\to i_*i^*g^*\TT_Y)).
\]
Let us set
\[
(g^*\TT_Y)_{-D_A}=\HoFib(g^*\TT_Y\to i_*i^*g^*\TT_Y)
\]
so that \(\TT_{\Map(\tilde{D},D,Y),g}\simeq \RR\Gamma(\tilde{D}_A,(g^*\TT_Y)_{-D_A})\).  Similarly, for any extension of \(f\) to \(\tilde{f}:D+E\to Y\), we have \(\TT_{\Map(\tilde{D},D+E,Y, \tilde{f}),g}\simeq\RR\Gamma(\tilde{D}_A,(g^*\TT_Y)_{-(D+E)_A})\).

Let us consider (1).  The multiplication
\[
(g^*\TT_Y)_{-D_A}\otimes(g^*\TT_Y)_{-(D+E)_A}\to\wedge^2g^*\TT_Y
\]
is zero; in an affine local patch of \(X\), if \(D=V(a)\) and \(E=V(b)\), then the map on sheaves is
\[
a(g^*\TT_Y)\otimes ab(g^*\TT_Y)\to \wedge^2g^*\TT_Y
\]
and \(a^2b=0\) on \(2D+E\).

Now, the symplectic structure on \(\Map(\tilde{D},Y)\) is a multiplicative form.  Thus, \(\RR\Gamma(\tilde{D},(g^*\TT_Y)_{-D})\) and \(\RR\Gamma(\tilde{D},(g^*\TT_Y)_{-(D+E)})\) are orthogonal in \(\TT_g\Map(\tilde{D},Y)\simeq\RR\Gamma(\tilde{D},g^*\TT_Y)\) under this structure.  Thus the map
\begin{align*}
\RR\Gamma(\tilde{D}_A,(g^*\TT_Y)_{-D_A})&\to\TT_{\Map(\tilde{D},Y),g}\to\LL_{\Map(\tilde{D},Y),g}[n-d+1]\\
&\to\RR\Gamma\left(\tilde{D}_A,(g^*\TT_Y)_{-(D+E)_A}\right)^\vee[n-d+1]
\end{align*}
is \(0\), so
\[
\RR\Gamma(\tilde{D}_A,(g^*\TT_Y)_{-D_A})\to\LL_{\Map(\tilde{D},Y),g}[n-d+1]\simeq\left(\RR\Gamma(\tilde{D}_A,g^*\TT_Y)\right)^\vee[n-d+1]
\]
factors through a map
\begin{equation}
\RR\Gamma(\tilde{D},(g^*\TT_Y)_{-D})\to\left(\RR\Gamma(\tilde{D},g^*\TT_Y\otimes\calo_{D+E})\right)^\vee[n-d+1].
\label{eq:divdual}
\end{equation}
To identify this map, we consider first the diagram:
\[
\xymatrix{
\calo_{X_A}(-(2D+E)_A)\ar[r]\ar[d]&\calo_{X_A}(-(2D+E)_A)\ar[r]\ar[d]&0\ar[d]\\
\calo_{X_A}(-D_A)\ar[r]\ar[d]&\calo_{X_A}\ar[r]\ar[d]&\calo_{D_A}\ar[d]\\
\calo_{(D+E)_A}(-D_A)\ar[r]&\calo_{(2D+E)_A}\ar[r]&\calo_{D}
}.
\]
As all columns and the first two rows are distinguished triangles, so is the last row; tensoring with \(g^*\TT_Y\) on \(\tilde{D}_A\) yields
\begin{align*}
(g^*\TT_Y)_{-D_A}&\simeq \HoFib(g^*\TT_Y\to g^*\TT_Y\otimes\calo_{D_A})\\
&\simeq g^*\TT_Y\otimes\calo_{(D+E)_A}(-D_A)\\
&\simeq g^*\TT_Y\otimes j_*K_{(D+E)_A/A},
\end{align*}
where \(j:(D+E)_A\to\tilde{D}_A\) is the inclusion.

Then the map (\ref{eq:divdual}) can be rewritten as a map
\[
\RR\Gamma((D+E)_A,j^*g^*\TT_Y\otimes K_{(D+E)_A/A})\to\left(\RR\Gamma((D+E)_A,j^*g^*\TT_Y)\right)^\vee[n-d+1].
\]
This is just the quasi-isomorphism
\[
\RR\Gamma((D+E)_A,j^*g^*\TT_Y\otimes K_{(D+E)_A/A})\to\RR\Gamma((D+E)_A,j^*g^*\LL_Y\otimes K_{(D+E)_A/A})[n]
\]
given by the symplectic structure on \(Y\), followed by the Serre-Grothendieck duality quasi-isomorphism
\[
\RR\Gamma((D+E)_A,(j^*g^*\TT_Y)^\vee\otimes K_{(D+E)_A/A})[n]\to\RR\Gamma((D+E)_A,j^*g^*\TT_Y)^\vee[n-d+1].
\]
In particular, (\ref{eq:divdual}) is a quasi-isomorphism.

The etaleness assumption now gives us that
\[
\RR\Gamma(\tilde{D}_A,g^*\TT_Y\otimes\calo_{(D+E)_A})\to\RR\Gamma(\tilde{D}_A,g^*\TT_Y\otimes\calo_{D_A})
\]
is a quasi-isomorphism, and so
\[
\RR\Gamma(\tilde{D}_A,(g^*\TT_Y)_{-(D+E)_A})\to\RR\Gamma(\tilde{D}_A,(g^*\TT_Y)_{-D_A})
\]
is as well.  Thus the map
\[
\wedge^2\RR\Gamma(\tilde{D}_A,(g^*\TT_Y)_{-D_A})\to\wedge^2\RR\Gamma(\tilde{D}_A,g^*\TT_Y)
\]
is \(0\), so \(0\) is an isotropic structure on \(\varphi\).  In addition, the map \(\TT_\varphi\to\LL_{\Map(\tilde{D},D,Y)}[n-d]\) is precisely the map (\ref{eq:divdual}) shifted by \(1\).  This is a quasi-isomorphism, so we have a Lagrangian structure on \(\varphi\).

Now consider case (2).  Suppose \(Y\) has a Poisson structure given by \(p:Y\to Z\), where \(Z\) has an \((n+1)\)-shifted symplectic structure \(\omega\) and \(p\) has a Lagrangian structure \(\gamma\).  Recall that the \((n-d+1)\)-shifted Poisson structure on \(\Map(\tilde{D},Y)\) is given by \(\Map(\tilde{D},Y)\to\Map(\tilde{D},Z)\), with the symplectic and Lagrangian structures induced from \(\omega\) and \(\gamma\).  I claim that
\[
\Map(\tilde{D},D,Y)\to\Map(\tilde{D},Y)
\]
is coisotropic.  The base \(B\) is defined as follows.  First let
\[
\tilde{B}=\Map(D+E,D,Y)\times_{\Map(D+E,D,Z)}\Map(\tilde{D},D,Z).
\]
Then \(\Map(\tilde{D},D,Y)\) has obvious maps to both factors, and thus we get a map \(\Map(\tilde{D},D,Y)\to \tilde{B}\).  Let \(B\) be a formal completion of \(\tilde{B}\) along \(\Map(\tilde{D},D,Y)\).  The required map \(q:B\to \Map(\tilde{D},D,Z)\) comes from the projection \(\tilde{B}\to\Map(\tilde{D},D,Z)\) followed by \(\Map(\tilde{D},D,Z)\to \Map(\tilde{D},Z)\).

First let's find a convenient representation of \(\TT_B\).  Let \(\underline{g}:\RSpec A\to B\) be an \(A\)-point, let \(pr:B\to \Map(\tilde{D},Y)\) be the projection, and let \(pr\circ\underline{g}\) correspond to \(g:\tilde{D}_A\to Y\).  In the diagram
\[
\xymatrix@C=0pt{
\RR\Gamma(\tilde{D}_A,(g^*\TT_p)_{-(D+E)_A})\ar[r]\ar[d]&\RR\Gamma(\tilde{D}_A,(g^*\TT_p)_{-D_A})\ar[rr]\ar[d]&&\RR\Gamma((D+E)_A,(j^*g^*\TT_p)_{-D_A})\ar[d]\\
\RR\Gamma(\tilde{D}_A,(g^*\TT_Y)_{-D_A})\ar[r]\ar[d]&\RR\Gamma(\tilde{D}_A,(g^*\TT_Y)_{-D_A})\oplus\RR\Gamma((D+E)_A,(g^*\TT_Y)_{-D_A})\ar[rr]\ar[d]&&\RR\Gamma((D+E)_A,(g^*\TT_Y)_{-D_A})\ar[d]\\
\TT_{B,\underline{g}}\ar[r]&\RR\Gamma(\tilde{D}_A,(g^*p^*\TT_Z)_{-D_A})\oplus\RR\Gamma((D+E)_A,(g^*\TT_Y)_{-D_A})\ar[rr]&&\RR\Gamma((D+E)_A,(j^*g^*p^*\TT_Z)_{-D_A})
}
\]
the last two columns and all rows are triangles, so the first column is as well.  Thus we have
\begin{equation}
\TT_{B,\underline{g}}\simeq\HoCofib(\RR\Gamma(\tilde{D}_A,(g^*\TT_p)_{-(D+E)_A})\to\RR\Gamma(\tilde{D}_A,(g^*\TT_Y)_{-D_A})).
\label{eq:Btan}
\end{equation}

For the Lagrangian structure on \(q\), let us identify \(q^*\Omega\), where \(\Omega\) is the symplectic structure on \(\Map(\tilde{D},Z)\).  For \(\ell\geq 2\), we have by (\ref{eq:Btan}):
\begin{align*}
\wedge^\ell\LL_{B,\underline{g}}\simeq\{&\wedge^\ell\RR\Gamma(\tilde{D}_A,(g^*\TT_Y)_{-D_A})^\vee\\
&\quad\to\RR\Gamma(\tilde{D}_A,(g^*\TT_p)_{-(D+E)_A})^\vee\otimes\wedge^{\ell-1}\RR\Gamma(\tilde{D}_A,(g^*\TT_Y)_{-D_A})^\vee\\
&\quad\to\cdots\}.
\end{align*}
Now, \(\Omega\) on \(\Map(\tilde{D},Z)\) is multiplicative, and pulling back a multiplicative form to
\[
\Sym^s\RR\Gamma(\tilde{D}_A,(g^*\TT_p)_{-(D+E)_A})^\vee\otimes\wedge^{\ell-s}\RR\Gamma(\tilde{D}_A,(g^*\TT_Y)_{-D_A})^\vee
\]
with \(s>0\) yields \(0\).  The weight \(\ell\) part of \(q^*\Omega\) corresponds to a map \(k\to(\wedge^\ell\LL_B)_g\), which in turn decomposes to a nonzero map \(k\to \wedge^\ell\RR\Gamma(\tilde{D}_A,(g^*\TT_Y)_{-D_A})^\vee\) and a \(0\) map to all later terms in the sequence.  A homotopy from this to \(0\) is given by restricting the isotropic structure \(\int_{[\tilde{D}]}ev^*\gamma\) from \(\wedge^\ell\RR\Gamma(\tilde{D}_A,g^*\TT_Y)^\vee\) to \(\wedge^\ell\RR\Gamma(\tilde{D}_A,(g^*\TT_Y)_{-D})^\vee\), and taking the \(0\) homotopy on all later terms.  This has the following interpretation: the pullback of \(\int_{[\tilde{D}]}ev^*\gamma\) to \(\Map(\tilde{D},D,Y)\) descends to \(B\), which is our isotropic structure on \(q\).

For the Lagrangian condition, using (\ref{eq:Btan}), consider the diagram
\[
\xymatrix{\RR\Gamma(\tilde{D}_A,(g^*\TT_p)\otimes\calo_{(D+E)_A})[-1]\ar[r]\ar[d]&\RR\Gamma(\tilde{D}_A,(g^*\TT_p)_{-(D+E)_A})\ar[r]\ar[d]&\RR\Gamma(\tilde{D}_A,g^*\TT_p)\ar[d]\\
\RR\Gamma(\tilde{D}_A,(g^*\TT_Y)\otimes\calo_{D_A})[-1]\ar[r]\ar[d]&\RR\Gamma(\tilde{D}_A,(g^*\TT_Y)_{-D_A})\ar[r]\ar[d]&\RR\Gamma(\tilde{D}_A,g^*\TT_Y)\ar[d]\\
\TT_{q,\underline{g}}\ar[r]&\TT_{B,\underline{g}}\ar[r]&\RR\Gamma(\tilde{D}_A,g^*p^*\TT_Z)}.
\]
The second two columns and all rows are triangles, so the first column is too.  Thus we have
\begin{equation}
\TT_{q,\underline{g}}\simeq\HoCofib(\RR\Gamma(\tilde{D}_A,(g^*\TT_p)\otimes\calo_{(D+E)_A})[-1]\to\RR\Gamma(\tilde{D}_A,(g^*\TT_Y)\otimes\calo_{D_A})[-1]).
\label{eq:qtan}
\end{equation}
Similarly to the map (\ref{eq:divdual}) above, we obtain a quasi-isomorphism
\[
\RR\Gamma(\tilde{D}_A,(g^*\TT_p)_{-D_A})\to\left(\RR\Gamma(\tilde{D}_A,g^*\TT_Y\otimes\calo_{(D+E)_A})\right)^\vee[n-d+1],
\]
namely
\begin{align*}
\RR\Gamma(\tilde{D}_A,(g^*\TT_p)_{-D_A})&\simeq\RR\Gamma((D+E)_A,j^*g^*\TT_Y\otimes K_{(D+E)_A/\Spec A})\\
&\to\RR\Gamma((D+E)_A,j^*g^*\LL_Y\otimes K_{(D+E)_A/\Spec A})[n],
\end{align*}
where \(\TT_p\to\LL_Y[n]\) is a quasi-isomorphism coming from the Lagrangian structure on \(p\), followed by the Serre-Grothendieck duality quasi-isomorphism
\[
\RR\Gamma((D+E)_A,(j^*g^*\TT_Y)^\vee\otimes K_{(D+E)_A/\Spec A})[n]\to\RR\Gamma((D+E)_A,j^*g^*\TT_Y)^\vee[n-d+1].
\]
Similarly, there is a natural quasi-isomorphism
\[
\RR\Gamma(\tilde{D}_A,(g^*\TT_p)_{-(D+E)_A})\to\left(\RR\Gamma(\tilde{D}_A,g^*\TT_Y\otimes\calo_{D_A})\right)^\vee[n-d+1].
\]
Then the map \(\TT_q\to\LL_B[n-d]\) is given by the diagram
\[
\xymatrix{
\TT_{q,\underline{g}}\ar[r]\ar[d]&\LL_{B,\underline{g}}[n-d+1]\ar[d]\\
\RR\Gamma(\tilde{D}_A,(g^*\TT_p)\otimes\calo_{(D+E)_A})\ar[r]^-{\sim}\ar[d]&\RR\Gamma(\tilde{D}_A,(g^*\TT_Y)_{-D_A})^\vee[n-d+1]\ar[d]\\
\RR\Gamma(\tilde{D}_A,(g^*\TT_Y)\otimes\calo_{D_A})\ar[r]^-{\sim}&\RR\Gamma(\tilde{D}_A,(g^*\TT_p)_{-(D+E)_A})^\vee[n-d+1]\\
};
\]
the columns are triangles by (\ref{eq:Btan}) and (\ref{eq:qtan}).  Then this map is a quasi-isomorphism, so the isotropic structure is Lagrangian.

Let \(Q=\Map(\tilde{D},Y)\times_{\Map(\tilde{D},Z)}B\) be the product, and \(r:\Map(\tilde{D},D,Y)\to Q\) the natural map.  For any \(A\)-point \(\underline{g}:\RSpec A\to \Map(\tilde{D},D,Y)\) corresponding to \(g:\tilde{D}_A\to Y\), consider the diagram
\[
\xymatrix{
\RR\Gamma(\tilde{D}_A,(g^*\TT_p)\otimes\calo_{(D+E)_A})[-1]\ar[r]\ar[d]&\RR\Gamma(\tilde{D}_A,(g^*\TT_p)_{-(D+E)_A})\ar[r]\ar[d]&\RR\Gamma(\tilde{D}_A,g^*\TT_p)\ar[d]\\
\RR\Gamma(\tilde{D}_A,(g^*\TT_Y)_{-D_A})\ar[r]\ar[d]&\RR\Gamma(\tilde{D}_A,g^*\TT_Y)\oplus\RR\Gamma(\tilde{D}_A,(g^*\TT_Y)_{-D_A})\ar[r]\ar[d]&\RR\Gamma(\tilde{D}_A,g^*\TT_Y)\ar[d]\\
(r^*\TT_Q)_g\ar[r]&\RR\Gamma(\tilde{D}_A,g^*\TT_Y)\oplus(r^*\pi_2^*\TT_B)\ar[r]&\RR\Gamma(\tilde{D}_A,g^*\TT_Z);
}
\]
again, everything but the first column is a triangle, so the first column is too.

Then
\[
(r^*\TT_Q)_g\simeq\HoCofib(\RR\Gamma(\tilde{D}_A,(g^*\TT_p)\otimes\calo_{(D+E)_A})[-1]\to\RR\Gamma(\tilde{D}_A,(g^*\TT_Y)_{-D_A})).
\]
The map
\[
\TT_{\Map(\tilde{D},D,Y),g}\simeq\RR\Gamma(\tilde{D}_A,(g^*\TT_Y)_{-D_A})\to(r^*\TT_Q)_g
\]
is precisely the structure morphism of the above cofiber.  Letting \(\omega_Q\) be the symplectic structure on \(Q\), we get \(r^*\omega_Q=0\), so \(r\) has \(0\) as isotropic structure.  It is easy to check that \(\TT_{r,g}\simeq\RR\Gamma(\tilde{D}_A,(g^*\TT_p)\otimes\calo_{(D+E)_A})[-2]\), and the map \(\TT_{r,g}\to\LL_{\Map(\tilde{D},D,Y),g}[n-d-1]\) is the quasi-isomorphism
\[
\RR\Gamma(\tilde{D}_A,(g^*\TT_p)\otimes\calo_{(D+E)_A})\to\RR\Gamma(\tilde{D}_A,(g^*\TT_Y)_{-D_A})^\vee[n-d+1]
\]
shifted by \(-2\).
\end{proof}
\end{proof}

Analogously to Theorems \ref{thm:Lagrmap} and \ref{thm:coisomap}, we have:
\begin{thm}
Let \(X\) be a \(d\)-dimensional proper smooth scheme and \(D\) an effective divisor.  Suppose \(E\) is an effective divisor of \(X\) such that \(\tilde{D}=2D+E\) is anticanonical.  Let \(Y\) be a derived Artin stack such that \(\Map(X,Y)\), \(\Map(\tilde{D},Y)\), \(\Map(D,Y)\), and \(\Map(D+E,Y)\) are themselves derived Artin stacks of locally finite presentation over \(k\).  Fix a base map \(f:D\to Y\).  Let \(W\) be a derived Artin stack and pick a map \(s:W\to Y\).
\begin{enumerate}
\item Suppose \(Y\) is \(n\)-shifted symplectic, that the projection \(\Map(D+E,Y)\to\Map(D,Y)\) is etale over \(f\), and that \(\Map(D+E,W)\to\Map(D,W)\) is etale over any lift \(\tilde{f}\) of \(f\).  Suppose \(s:W\to Y\) has a Lagrangian structure.  Then \(\Map(X,D,W)\to\Map(X,D,Y)\) has a natural Lagrangian structure.
\item Suppose \(Y\) is \(n\)-shifted Poisson, and \(s:W\to Y\) has a coisotropic structure.  Then \(\newline{\Map(X,D,W)\to\Map(X,D,Y)}\) has a natural coisotropic structure.
\end{enumerate}
\label{thm:frLagrmap}
\end{thm}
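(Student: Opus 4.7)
The plan is to mirror the Cartesian decomposition used in the proof of Theorem \ref{thm:frmap}, applied simultaneously to both targets so that each factor carries a relative coisotropic structure. As in that proof we have
\begin{align*}
\Map(X,D,Y) &\simeq \Map(X,Y) \times_{\Map(\tilde{D},Y)} \Map(\tilde{D},D,Y), \\
\Map(X,D,W) &\simeq \Map(X,W) \times_{\Map(\tilde{D},W)} \Map(\tilde{D},D,W).
\end{align*}
The strategy is to produce Lagrangian (resp.\ coisotropic) structures on the three pieces
\[
\Map(\tilde{D},W) \to \Map(\tilde{D},Y), \quad \Map(X,W) \to \Map(\tilde{D},W), \quad \Map(\tilde{D},D,W) \to \Map(\tilde{D},W),
\]
over their respective $Y$-analogs, and then invoke the coisotropic intersection machinery.

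The three factors are handled as follows. First, $\Map(\tilde{D},W) \to \Map(\tilde{D},Y)$ inherits a Lagrangian (resp.\ coisotropic) structure from that on $s$ via Theorem \ref{thm:Lagrmap} (resp.\ Theorem \ref{thm:coisomap}), using the $(d-1)$-orientation on the anticanonical divisor $\tilde{D}$. Second, $\Map(X,W) \to \Map(\tilde{D},W)$ is Lagrangian (resp.\ coisotropic) over $\Map(X,Y) \to \Map(\tilde{D},Y)$ by Theorem \ref{thm:coisobdry} (resp.\ Corollary \ref{cor:coisobdrytwo}), applied to the canonical nondegenerate boundary structure on $i:\tilde{D}\to X$ used in Corollary \ref{cor:acanLag}. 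Third, $\Map(\tilde{D},D,W) \to \Map(\tilde{D},W)$ is Lagrangian (resp.\ coisotropic) over $\Map(\tilde{D},D,Y) \to \Map(\tilde{D},Y)$ by a relative version of Lemma \ref{lem:Lagfib}.

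With these three coisotropic correspondences in hand, Corollary \ref{cor:coisoint} (in case (1), one may instead invoke Theorem \ref{thm:Lagrint}) assembles them into the claimed Lagrangian/coisotropic structure on $\Map(X,D,W) \to \Map(X,D,Y)$.

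The main obstacle is the third step, the relative version of Lemma \ref{lem:Lagfib}. The argument will reproduce the tangent-complex bookkeeping of that lemma at an $A$-point $g:\tilde{D}_A\to W$, using the identification
\[
(g^*\TT_W)_{-D_A} \simeq g^*\TT_W \otimes j_* K_{(D+E)_A/A},
\]
the orthogonality of sections vanishing on $D$ versus on $D+E$ under the multiplicative symplectic form on $\Map(\tilde{D},Y)$, and Serre-Grothendieck duality on $(D+E)_A$, but now with $\TT_Y$ replaced by the relative tangent $\TT_s$. In case (1), the hypothesis that $\Map(D+E,W)\to\Map(D,W)$ is etale over $\tilde{f}$ is precisely what upgrades the resulting isotropic datum to Lagrangian. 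In case (2), one must additionally track the formal thickening witnessing the coisotropic structure on $s$ through the cofiber sequences for $\TT_B$ and $\TT_q$ that appear in the Poisson half of the proof of Lemma \ref{lem:Lagfib}; this bookkeeping is the most involved portion of the argument.
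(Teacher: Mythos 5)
Your decomposition, the three ingredient structures, and the relative version of Lemma \ref{lem:Lagfib} are exactly the paper's proof; the only slip is the final citation, which should be the correspondence-composition results Lemma \ref{lem:Lagrcorr} and Corollary \ref{cor:coisocorr} (which produce a Lagrangian/coisotropic structure on the map \(\Map(X,D,W)\to\Map(X,D,Y)\) between the two fiber products), not the intersection results Theorem \ref{thm:Lagrint} and Corollary \ref{cor:coisoint}, which only yield a symplectic/Poisson structure on a single fiber product. With that correction the argument matches the paper step for step.
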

\begin{proof}
Similarly to the previous theorem, we use the fiber diagram
\[
\xymatrix{
\Map(X,D,W)\ar[rr]\ar[dr]\ar[dd]&&\Map(\tilde{D},D,W)\ar[dr]\ar'[d][dd]\\
&\Map(X,D,Y)\ar[rr]\ar[dd]&&\Map(\tilde{D},D,Y)\ar[dd]^{\theta}\\
\Map(X,W)\ar'[r][rr]\ar[dr]&&\Map(\tilde{D},W)\ar[dr]^{\eta}\\
&\Map(X,Y)\ar[rr]^{\zeta}&&\Map(\tilde{D},Y)
}.
\]
The front and back faces are Cartesian squares.  We have a Lagrangian (resp. coisotropic) structure on \(\zeta\) by Corollary \ref{cor:acanLag} (Corollary \ref{cor:acancois}), on \(\eta\) by Theorem \ref{thm:Lagrint} (Corollary \ref{cor:coisoint}), and on \(\theta\) by Lemma \ref{lem:Lagfib}.  We also have a Lagrangian (coisotropic) structure on
\[
\Map(X,W)\to\Map(X,Y)\times_{\Map(\tilde{D},Y)}\Map(\tilde{D},W)
\]
by Theorem \ref{thm:coisobdry} (Corollary \ref{cor:coisobdrytwo}).  If we show that
\[
\Map(\tilde{D},D,W)\to\Map(\tilde{D},D,Y)\times_{\Map(\tilde{D},Y)}\Map(\tilde{D},W)
\] has a Lagrangian (coisotropic) structure, then we will be done by Lemma \ref{lem:Lagrcorr} (Corolllary \ref{cor:coisocorr}). 
 As before, we put this in a separate lemma:
\begin{lem}
Let \(X,Y,W,D,\tilde{D}\) be as in the theorem.  Then
\begin{enumerate}
\item Suppose \(Y\) is \(n\)-shifted symplectic, that the projection \(\Map(D+E,Y)\to\Map(D,Y)\) is etale over \(f\), and that \(\Map(D+E,W)\to\Map(D,W)\) is etale over any lift \(\tilde{f}\) of \(f\).  Suppose \(s:W\to Y\) has a Lagrangian structure.  Then \(r:\Map(\tilde{D},D,W)\to\Map(\tilde{D},D,Y)\times_{\Map(\tilde{D},Y)}\Map(\tilde{D},W)\) has a canonical Lagrangian structure.
\item Suppose \(Y\) is \(n\)-shifted Poisson and that \(s:W\to Y\) has a coisotropic structure.  Then \(r:\Map(\tilde{D},D,W)\to\Map(\tilde{D},D,Y)\times_{\Map(\tilde{D},Y)}\Map(\tilde{D},W)\) has a canonical coisotropic structure.
\end{enumerate}
\end{lem}
\begin{proof}
For (1), let \(\gamma\) be the Lagrangian structure on \(s\).  If \(\Omega\) is the induced symplectic structure on \(\Map(\tilde{D},D,Y)\times_{\Map(\tilde{D},Y)}\Map(\tilde{D},W)\), then we have \(r^*\Omega=-\int_{[\tilde{D}]}ev^*\gamma\).  But this is a multiplicative form, so is already \(0\) on
\[
\wedge^\ell\TT_{\Map(\tilde{D},D,W),g}\simeq\wedge^\ell\RR\Gamma(\tilde{D}_A,(g^*\TT_W)_{-D_A})\simeq\wedge^\ell\RR\Gamma(\tilde{D}_A,(g^*\TT_W)_{-(D+E)_A}),\quad (\ell\geq 2)
\]
for any cdga \(A\) and \(g:\tilde{D}\to W\) framed along \(D_A\); here the second quasi-isomorphism comes from the etaleness condition.  Thus \(0\) is an isotropic structure.  For the Lagrangian condition, we have
\[
\TT_{r,g}\simeq\RR\Gamma(\tilde{D}_A,g^*\TT_s\otimes\calo_{D_A})\simeq\RR\Gamma(\tilde{D}_A,g^*\TT_s\otimes\calo_{(D+E)_A}),
\]
and the map \(\TT_r\to\LL_{\Map(\tilde{D},D,W)}[n-d]\) is the quasi-isomorphism
\[
\RR\Gamma((D+E)_A,g^*\TT_s)\to\RR\Gamma((D+E)_A,g^*\LL_W)[n-1]
\]
from the Lagrangian condition on \(s\), followed by the Serre-Grothendieck quasi-isomorphism
\begin{align*}
\RR\Gamma((D+E)_A,g^*\LL_W)&\to\RR\Gamma((D+E)_A,g^*\TT_W\otimes K_{(D+E)_A/A})^\vee[1-d]\\
&\simeq\RR\Gamma(\tilde{D}_A,(g^*\TT_W)_{-D_A})^\vee[1-d],
\end{align*}
as in the proof of the previous theorem.

For (2), let the Poisson structure on \(Y\) be given by \(p:Y\to Z\) with Lagrangian structure \(\gamma\), and the coisotropic structure on \(W\to Y\) given by \(u:W\to Y\times_ZY'\) with Lagrangian structure \(\epsilon\), where \(p':Y'\to Z\) has Lagrangian structure \(\gamma'\).

Let
\[
\tilde{B}=\Map(D+E,D,Y)\times_{\Map(D+E,D,Z)}\Map(\tilde{D},D,Z)
\]
and \(B\) a formal neighborhood of \(\Map(\tilde{D},D,Y)\) in \(\tilde{B}\).  Recall that the coisotropic structure on \(\Map(\tilde{D},D,Y)\to\Map(\tilde{D},Y)\) came from the map \(B\to\Map(\tilde{D},Z)\).  In our present case, the Poisson structure on \(\Psi:=\Map(\tilde{D},D,Y)\times_{\Map(\tilde{D},Y)}\Map(\tilde{D},W)\) comes from
\[
\Psi\to \Xi:=B\times_{\Map(\tilde{D},Z)}\Map(\tilde{D},Y').
\]
Let
\[
\tilde{B}'=\Map(D+E,D,W)\times_{\Map(D+E,D,Y')}\Map(\tilde{D},D,Y')
\]
and \(B'\) a formal neighborhood of \(\Map(\tilde{D},D,W)\) in \(\tilde{B}'\).  The maps
\[
\tilde{B}'\to\Map(\tilde{D},D,Y')\to\Map(\tilde{D},Y')
\]
and \(\tilde{B}'\to\tilde{B}\) give a map \(q':B'\to \Xi\).  As in the previous theorem, one can give an isotropic structure on \(q'\) by pulling back the isotropic structure on \(\Map(\tilde{D},W)\to\Map(\tilde{D},Y\times_ZY')\) to \(\Map(\tilde{D},D,W)\), then descending to \(B'\).  For Lagrangianness, fix an \(A\)-point \(\underline{g}:A\to B'\); letting \(pr:B'\to \Map(\tilde{D},D,W)\) be the projection, \(pr\circ\underline{g}\) will correspond to a map \(g:\tilde{D}_A\to W\) framed along \(D_A\).  As in the previous theorem, one shows the columns of the following diagram are distinguished triangles, and the second and third rows are quasi-isomorphisms:
\[
\xymatrix{
\TT_{q',\underline{g}}\ar[r]\ar[d]&\LL_{B',\underline{g}}[n-d+2]\ar[d]\\
\RR\Gamma(\tilde{D}_A,(g^*\TT_u)\otimes\calo_{(D+E)_A})\ar[r]^-{\sim}\ar[d]&\RR\Gamma(\tilde{D}_A,(g^*\TT_W)_{-D_A})^\vee[n-d+2]\ar[d]\\
\RR\Gamma(\tilde{D}_A,(g^*\TT_s)\otimes\calo_{D_A})\ar[r]^-{\sim}&\RR\Gamma(\tilde{D}_A,(g^*\TT_{s'})_{-(D+E)_A})^\vee[n-d+2]\\
}.
\]
Let \(\rho:\Map(\tilde{D},D,W)\to R:=B'\times_{\Xi}\Psi\), and let \(\Omega_R\) be the induced structure on \(R\); then \(\rho^*\Omega_R\) is already \(0\), so \(0\) is an isotropic structure.  For the Lagrangian condition, fix \(\underline{g}:\RSpec A\to \Map(\tilde{D},D,W)\) corresponding to \(g:\tilde{D}_A\to W\) framed along \(D_A\).  The map \(\TT_{\rho,g}\to \LL_{\Map(\tilde{D},D,W),g}[n-d-1]\) is just
\[
\RR\Gamma(\tilde{D}_A,g^*\TT_u\otimes\calo_{(D+E)_A})[-1]\to\RR\Gamma(\tilde{D}_A,(g^*\TT_W)_{-D_A})[n-d-1],
\]
analogous to previous maps.  This gives the coisotropic structure on \(\Map(\tilde{D},D,W)\to \Psi\) we needed.
\end{proof}
\end{proof}
\section{Examples}
\subsection{Framed Vector Bundles on Surfaces}
As an application of these theorems, let \(X\) be a smooth surface with effective anticanonical bundle, and take effective divisors \(D\) and \(E\) such that \(2D+E\) is anticanonical.  Let \(G\) be a reductive group.  Choose a map \(D\to BG\), that is, a \(G\)-bundle \(\mathcal{G}\to D\).  The space \(\Map(X,D,BG,\mathcal{G})\) has, by Theorem \ref{thm:frmap}, a \(0\)-shifted Poisson structure.  This structure will be symplectic if \(\Map(D+E,BG)\to\Map(D,BG)\) is etale over \(\mathcal{G}\).  That is, if for every extension \(\widetilde{\mathcal{G}}\to D+E\) of \(\mathcal{G}\), the map
\[
H^*(D+E,ad(\widetilde{\mathcal{G}}))\to H^*(D,ad(\mathcal{G}))
\]
is an isomorphism in all degrees.  Assuming the moduli space is a smooth variety (or looking at a semistable locus), this will be an ordinary Poisson or symplectic structure.  Taking \(\zeta\in H^0(X,E)\) to be a section vanishing on \(E\), this is precisely Theorem 4.3 of \cite{Bottacin}.

In particular, let us consider the case where \(X=\PP^2\), \(D=E\) is a line \(L\), \(G=SL_n\), and \(\mathcal{G}\) is the trivial bundle.  The space \(\Map(\PP^2,L,BSL_n,\mathcal{G})\) may be identified with the framed \(SU(n)\)-instantons on \(S^4\) (\cite{Donaldson}).  In this case, the only extension of \(\mathcal{G}\) to \(2L\) is the trivial bundle again, and the failure of
\[
H^*(2L,\mathfrak{sl}_n\otimes\calo_{2L})\to H^*(L,\mathfrak{sl}_n\otimes\calo_L)
\]
to be an isomorphism is given by
\[
H^*(L,\mathfrak{sl}_n\otimes\calo_{L}(-1))=0,
\]
so we have a symplectic structure.

\subsection{Monopoles}
Let \(G\) be a semisimple complex Lie group and \(B\) a Borel subgroup.  Let \(Y=G/B\) be the complete flag variety of \(G\).  Fix a point \(p\in\PP^1\).  The space \(\Map(\PP^1,p,Y)\) is the space of framed \(G\)-monopoles on \(\RR^3\) \cite{Jarvis}.  In \cite{FKMM} the authors show that this space has a symplectic structure.  More generally, let \(P\) be a parabolic subgroup of \(G\) and \(Y=G/P\) the partial flag variety; then it is shown that \(\Map(\PP^1,p,Y)\) has a Poisson structure.  Here I will show that the Poisson and symplectic structures arise from the machinery of shifted structures on framed mapping spaces.

The obvious strategy to find a \(1\)-shifted symplectic or Poisson structure on \(Y=G/B\) or \(G/P\) and use Theorem \ref{thm:frmap} to induce a structure on \(\Map(\PP^1,p,Y)\).  This is impossible in the \(P=B\) case, as we would expect a \(1\)-shifted symplectic structure on \(G/B\), which would yield a quasi-isomorphism between \(\frg/\frp\) and \(\frr[1]\).  More generally, it is not too hard to show that whatever Poisson structure we put on \(G/P\), we will end up with the \(0\) Poisson structure on \(\Map(\PP^1,p,G/P)\).

Instead, we note that \(G/P\) is already related to an existing shifted symplectic stack, \(BG\), via the fiber diagram
\begin{equation}
\xymatrix{G/P\ar[r]\ar[d]&BP\ar[d]\\
\bullet\ar[r]&BG
}.
\label{flagsquare}
\end{equation}
Recall that the symplectic structure on \(BG\) is given by a \(G\)-invariant nondegenerate symmetric quadratic form on \(\frg\).  Fix such a form \(\omega\).

Choose an opposite parabolic \(P^-\) so that \(P\cap P^-=L\) is a Levi subgroup of \(G\).  Letting \(\frl=\operatorname{Lie}(L)\), we can then write \(\frg=\frr^-\oplus\frl\oplus\frr\).  Since \(\frl\) is orthogonal to \(\frr\) and \(\frr^-\), \(\omega\) descends to \(\frl\) and is also \(L\)-invariant and nondegenerate.  Thus, \(BL\) has a symplectic structure \(\omega_L\) induced from \(BG\).  Recall that the identification \(L\cong P/\operatorname{rad}(P)\) gives us a map \(P\to L\).
\begin{lem}
The map \(\iota:BP\to BG\times BL\) has a Lagrangian structure given by \(0\).  Thus \(BP\to BG\) has a coisotropic structure.
\end{lem}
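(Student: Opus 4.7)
The strategy is to verify both the isotropic structure and the nondegeneracy condition directly at the basepoint $\ast \in BP$, reducing everything to linear algebra on $\frg$ using the decomposition $\frg = \frr^-\oplus\frl\oplus\frr$ (so $\frp = \frl\oplus\frr$). The identifications $\TT_{BG,\ast}\simeq\frg[1]$, $\TT_{BL,\ast}\simeq\frl[1]$, $\TT_{BP,\ast}\simeq\frp[1]$ are standard, and the map induced by $\iota$ on tangent spaces is $\frp[1]\to(\frg\oplus\frl)[1]$, $x\mapsto(x,\pi_\frl(x))$, where $\pi_\frl:\frp\twoheadrightarrow\frp/\frr=\frl$.

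For the isotropic structure, represent the $2$-shifted symplectic form on $BG\times BL$ by the quadratic form $\omega\ominus\omega_L$ on $\frg\oplus\frl$ (the sign chosen so the Lagrangian intersection formalism of Theorem \ref{thm:Lagrint} is consistent). The strict pullback to $\frp\otimes\frp$ equals $\omega|_{\frp}-\pi_\frl^*\omega_L$. I would argue this vanishes on the nose by weight considerations: pick a central element $h$ of $\frl$ with $\frr$ of positive weight and $\frr^-$ of negative weight under $\operatorname{ad}(h)$; then $G$-invariance of $\omega$ forces $\omega(\frr,\frr)=0$ and $\omega(\frl,\frr)=0$, so $\omega|_{\frp}$ is exactly the pullback of $\omega_L$ along $\pi_\frl$. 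Hence $\iota^*(\omega\ominus\omega_L)=0$ strictly, and the canonical isotropic structure $h=0$ extends to all closedness data trivially.

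For the Lagrangian condition, compute $\TT_\iota$ at $\ast$ as the fiber of $\frp[1]\to(\frg\oplus\frl)[1]$, represented by the two-term complex $[\frp\to\frg\oplus\frl]$ with $\frp$ in degree $-1$. Since $\frp\hookrightarrow\frg$ is injective, $H^{-1}(\TT_\iota)=0$; a direct calculation using the decomposition of $\frg$ identifies the cokernel $H^0(\TT_\iota)$ with $\frr^-\oplus\frl$. On the other hand $\LL_{BP}[1]\simeq\frp^\vee=\frl^\vee\oplus\frr^\vee$, and the comparison map $\Theta_0:\TT_\iota\to\LL_{BP}[1]$ induced by $\omega\ominus\omega_L$ is precisely the map sending $\frr^-\oplus\frl$ to $\frr^\vee\oplus\frl^\vee$ via the $\omega$-pairing $\frr^-\simeq\frr^\vee$ and the $\omega_L$-pairing $\frl\simeq\frl^\vee$. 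Both factors are isomorphisms by nondegeneracy of $\omega|_{\frr^-\otimes\frr}$ (which follows from nondegeneracy of $\omega$ together with $\omega(\frr,\frr)=\omega(\frr,\frl)=0$) and nondegeneracy of $\omega_L$. Hence $\Theta_0$ is a quasi-isomorphism, as required.

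The main obstacle I anticipate is not any individual step but the bookkeeping in the last paragraph: verifying that the natural isomorphism $\frr^-\oplus\frl\simeq\frp^\vee$ produced by the cokernel calculation agrees (up to an inessential sign) with the map $\Theta_0$ produced by the formal isotropic-to-comparison construction. This is standard once one writes out the dual pairing carefully, but it requires being precise about which complement to $\frp$ in $\frg$ is Lagrangian (namely $\frr^-$, paired with $\frr\subset\frp$ by $\omega$) and about the sign conventions inherited from $\omega\ominus\omega_L$.
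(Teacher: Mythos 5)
Your proposal is correct and follows essentially the same route as the paper: reduce everything to linear algebra on $\frg=\frr^-\oplus\frl\oplus\frr$ at the basepoint, check isotropy of $\frp=\frl\oplus\frr$ for $\omega-\omega_L$ using $\omega(\frr,\frr)=\omega(\frr,\frl)=0$, and verify nondegeneracy via the $\omega$-pairing of $\frr^-$ with $\frr$ together with the pairing of the (anti)diagonal copies of $\frl$. The only difference is that you supply the weight argument for the orthogonality relations and spell out the cohomology of $\TT_\iota$, both of which the paper leaves implicit.
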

\begin{proof}
The claim that \(0\) is an isotropic structure reduces to the claim that \(\iota:\frp\to\frg\oplus \frl\) is isotropic in the usual sense with respect to \(\omega-\omega_L\).  Write \(\frp=\frl\oplus\frr\) and recall that \(\frr\) is orthogonal to itself and \(\frl\).  Then
\[
(\omega-\omega_L)(\iota_*(\ell,r),\iota_*(\ell',r'))=\omega(\ell,\ell')-\omega(\ell,\ell')=0,
\]
so we have isotropy.

For the Lagrangian condition, recall that \(\omega\) pairs up \(\frr\) nondegenerately with \(\frr^-\).  Let \(\newline{\Delta,\overline{\Delta}:\frl\to\frg\oplus\frl}\) be the diagonal and antidiagonal maps, and note that \(\omega-\omega_L\) pairs up \(\Delta(\frl)\) and \(\overline{\Delta}(\frl)\) nondegenerately.  Then the map \(\TT_\iota\to\LL_{BP}[1]\) is just the adjoint \(\overline{\Delta}(\frl)\oplus\frr^-\to(\Delta(\frl)\oplus\frr)^\vee\).
\end{proof}

So \(BP\to BG\) has a coisotropic structure, and if \(\bullet\to BG\) had one too, we would get a \(1\)-shifted Poisson structure on \(G/P\) by Corollary \ref{cor:coisoint}.  As mentioned, there is no decent shifted Poisson structure on \(G/P\) and it is also easy to check that \(\bullet\to BG\) has no coisotropic structure.  Instead, we apply the functor \(\Map(\PP^1,p,-)\) to (\ref{flagsquare}) to get
\[
\xymatrix{\Map(\PP^1,p,G/P)\ar[r]\ar[d]&\Map(\PP^1,p,BP)\ar[d]\\
\Map(\PP^1,p,\bullet)\ar[r]&\Map(\PP^1,p,BG)
},
\]
and note that now \(\Map(\PP^1,p,BG)\) is \(1\)-symplectic and \(\Map(\PP^1,p,BP)\to\Map(\PP^1,p,BG)\) is coisotropic by Theorem \ref{thm:frLagrmap}.  Let \(G_{\PP^1}\) denote the trivial \(G\)-bundle on \(\PP^1\), framed at \(p\).  Then the map
\[
\bullet=\Map(\PP^1,p,\bullet)\to \Map(\PP^1,p,BG)
\]
is just the point \(G_{\PP^1}\).  And this \emph{is} coisotropic, as
\[
\TT_{G_{\PP^1}}\Map(\PP^1,p,BG)=\mathcal{E}xt^*(\calo_{\PP^1}\otimes\frg,\calo_{\PP^1}\otimes\frg)(-1)[1]\simeq0.
\]
Then the map \(\bullet\to\Map(\PP^1,p,BG)\) is trivially Lagrangian, hence coisotropic.  Then Corollary \ref{cor:coisoint} gives a \(0\)-shifted Poisson structure on \(\Map(\PP^1,p,G/P)\).

To be a little more specific, the maps are coisotropic over \(\Map(\PP^1,p,\overline{BL})\to\bullet\) and \(\bullet\to\bullet\) respectively, so the Poisson structure comes from a map \(\Map(\PP^1,p,G/P)\to\Map(\PP^1,p,BL)\).  In particular, in the case \(P\) is a Borel, \(L\) is a torus, and so \(\Map(\PP^1,p,BL)=\prod_{i=1}^r\Map(\PP^1,p,B\mathbb{G}_m)\), where \(r\) is the rank of \(G\).  But \(\Map(\PP^1,p,B\mathbb{G}_m)\cong\mathbb{Z}\), so \(\Map(\PP^1,p,BL)\) is a disjoint union of points \(\bullet_1\) with the trivial \(1\)-symplectic structure.  Thus, \(\Map(\PP^1,p,G/P)\) is in fact symplectic.

\subsection{Projectivized Cotangent Bundles}
Let \(W\) be a smooth compact projective \(d\)-dimensional variety and consider the projective bundle \(X=\PP(\calo_W\oplus T^\vee W)\xrightarrow{\pi}W\).  Let \(\calo_Y(1)\) be the relative hyperplane bundle such that \(\pi_*\calo_X(1)\simeq \calo_W\oplus T^\vee W\), and let \(D\cong\PP(T^\vee W\) be the divisor at infinity.

Then we have \(\calo_X(1)\cong\calo_X(D)\), and \(K_X\cong \calo_X(-(d+1)D)\), so \(X\) is Fano.  Taking \(E=(d-1)D\), let us apply Theorem \ref{thm:frmap} to the case where \(Y=BGL_n\).  Fixing a vector bundle \(F\) on \(D\), the moduli space \(\Map(X,D,BGL_n,F)\) consists of vector bundles \(\tilde{F}\) on \(X\) with \(\tilde{F}|_D\simeq F\).  Theorem \ref{thm:frmap} tells us this has a \((1-d)\)-shifted Poisson structure.

When will this be a symplectic structure?  We will need to show that, for every extension of \(F\) to \(F'\) on \(D+E\), the map
\[
\TT_{\Map(D+E,BGL_n),F'}\to\TT_{\Map(D,BGL_n),F}
\]
is a quasi-isomorphism.  Vector bundles on \(D+E=dD\) are governed by \(H^1(dD,GL_n(\calo_{dD}))\).  Let \(\calo_D(1)\cong\calo_D(D)\) be the relative hyperplane bundle of \(D\) over \(W\), and let \(i:D\to dD\) be the inclusion.  Then as a \(\calo_D\) module, we have \(i^*\calo_{dD}\simeq\bigoplus_{i=0}^{d-1}\calo_D(-i)\), so \(ker(\calo_{dD}\to\calo_D)\simeq\bigoplus_{i=1}^{d-1}\calo_D(-1)\) and similarly
\[
ker(GL_n(\calo_{dD})\to GL_n(\calo_D))\simeq\operatorname{Mat}_{n\times n}(\bigoplus_{i=1}^{d-1}\calo_D(-i)).
\]
Thus we get an exact sequence
\[
H^0(D,\operatorname{Mat}_{n\times n}(\oplus_{i=1}^{d-1}\calo_D(-i)))\to H^1(dD,GL_n(\calo_{dD}))\to H^1(D,GL_n(\calo_D)).
\]
But \(H^0(D,\calo_D(-i))\simeq0\) for \(1\leq i\leq d-1\), so the map \(H^1(dD,GL_n(\calo_{dD}))\to H^1(D,GL_n(\calo_D))\) is injective.  In particular, \(F\) has only one extension to \(dD\), namely \(F\otimes\calo_{dD}\).

It remains to see when \(\TT_{\Map(dD,BGL_n),F\otimes\calo_{dD}}\to\TT_{\Map(D,BGL_n),F}\) is a quasi-isomorphism.  We have \(\TT_{\Map(D,BGL_n),F}\simeq Ext^*(F,F)[1]\), and
\[
\TT_{\Map(dD,BGL_n),F\otimes\calo_{dD}}\simeq Ext^*(F\otimes\calo_{dD},F\otimes\calo_{dD})[1]\simeq\RR\Gamma(D,\mathcal{H}om(F,F)\otimes i^*\calo_{dD})[1].
\]
Thus we get a triangle
\[
\RR\Gamma(D,\mathcal{H}om(F,F)\otimes \bigoplus_{i=1}^{d-1}\calo_D(-i))[1]\to\TT_{\Map(D+E,BGL_n),F'}\to\TT_{\Map(D,BGL_n),F}.
\]
Thus we get a shifted symplectic structure if and only if
\[
H^*(\mathcal{H}om(F,F)\otimes\calo_D(-i))=0\quad 1\leq i\leq d-1.
\]

\end{document}